\theoremstyle{plain}
\newtheorem{thm}{\protect\theoremname}
  \theoremstyle{plain}
  \newtheorem{cor}[thm]{\protect\corollaryname}
  \theoremstyle{plain}
  \newtheorem{lem}[thm]{\protect\lemmaname}
  \theoremstyle{plain}
  \newtheorem{prop}[thm]{\protect\propositionname}
 \theoremstyle{remark} 
 \newtheorem{rem}{Remark}[section]
  \providecommand{\corollaryname}{Corollary}
  \providecommand{\lemmaname}{Lemma}
  \providecommand{\propositionname}{Proposition}
\providecommand{\theoremname}{Theorem}
\newcommand\vv{\textsc{v}}
\newcommand{\G}{{\mathcal{G}}}
\newcommand{\R}{{\mathbb{R}}}
\begin{document}

\title{Peaked and low action solutions of NLS equations
on graphs with terminal edges}
\author{S. Dovetta, M. Ghimenti, A. M. Micheletti, A. Pistoia}

\maketitle

\begin{abstract}
	We consider the nonlinear Schr\"odinger equation with focusing power--type nonlinearity on compact graphs with at least one terminal edge, i.e. an edge ending with a vertex of degree 1. On the one hand, we introduce the associated action functional and we provide a profile description of positive low action solutions at large frequencies, showing that they concentrate on one terminal edge, where they coincide with suitable rescaling of the unique solution to the corresponding problem on the real line. On the other hand, a Ljapunov--Schmidt reduction procedure is performed to construct one--peaked and multipeaked positive solutions with sufficiently large frequency, exploiting the presence of one or more terminal edges.
\end{abstract}

\section{Introduction}

Metric graphs (or networks) are locally one--dimensional structures built of several intervals, the {\em edges}, glued together at some of their endpoints, the {\em vertices}. The specific way in which the edges are joined determines the topology of the graph. When a differential operator acting on functions supported on the graph is defined, we also speak of {\em quantum graphs}. 

The birth of quantum graphs can be traced back to the first half of the Fifties of the last century \cite{RS}, when the spectral analysis of Schr\"odinger operators on a network modelling molecular bonds has been proposed to investigate the behaviour of valence electrons in a naphthalene molecule. Since then, graphs have been assumed to provide a meaningful tool to model the dynamics of systems confined to ramified domains.

Despite the fact that, in general, to rigorously justify the graph approximation is still an open problem (see for instance \cite{grieser,molchanov} as well as \cite{CF,EP} and references therein), the last decades have been witnessing a renewed interest in the theory of quantum graphs, mainly driven by a wide variety of applications, e.g. Josephson junctions, propagations of signals, nonlinear optics and so on. Among these, the most prominent topic is probably given by the theory of Bose--Einstein condensates, that contributes to gather the focus on {\em nonlinear Schr\"odinger} (NLS) equations as
\begin{equation}
\label{tnlse}
-i\partial_t\psi(x,t)=\Delta_x \psi(x,t)+|\psi(x,t)|^{p-1}\psi(x,t)\,.
\end{equation}
Particularly, many efforts have been profuse in the analysis of {\em standing waves} of \eqref{tnlse}, i.e. solutions of the form $\psi(x,t)=e^{i\lambda t}u(x)$, for suitable $\lambda\in\R$ and $u$ solving the associated stationary equation
\begin{equation}
\label{nlseq}
-u''+\lambda u=|u|^{p-1}u\,.
\end{equation}
First investigations have been developed on specific examples of graphs with half--lines, such as star graphs (see for instance \cite{ACFN-jde14,ACFN-aihp14,N}) and the tadpole graph, \cite{NPS}. Later, the problem has been addressed on general non--compact graphs with half--lines, for which a quite well--established theory of existence of standing waves is nowadays available (see the series of works \cite{AST-CVPDE,AST-JFA,AST-CMP} for the case of the nonlinearity extended to the whole graph, and \cite{DT-p,DT,ST-JDE,ST-NA,T-JMAA} for the counterpart with nonlinearities restricted to the compact core). Broadening the discussion, several results have been accomplished also on compact graphs \cite{CDS,D-jde,MP-AMRX} and periodic graphs \cite{AD,ADST,D-nodea,Pa,PS}. Furthermore, similar investigations have been recently initiated on different families of nonlinear equations too, i.e. nonlinear KdV equation, \cite{MNS}, and nonlinear Dirac equation \cite{BCT,BCT1}.

From the standpoint of Critical Point Theory, solutions of \eqref{nlseq} can be identified at least in two different ways. On the one hand, one can search for critical points of the {\em energy functional} $E:H^1(\G)\to\R$
\[
E(u,\G):=\frac{1}{2}\int_\G|u'|^2\,dx-\frac{1}{p+1}\int_\G|u|^{p+1}\,dx
\]
in the constrained space of functions $u\in H^1(\G)$ with {\em prescribed mass }$\rho^2$, that is
\[
\int_\G|u|^2\,dx=\rho^2\,.
\]
This is for instance the general framework of \cite{AST-CVPDE,AST-JFA,AST-CMP} and related works, where it has been shown that the problem is sensitive both to topological and metric properties of the graph.

On the other hand, given $\lambda>0$, one can look for unconstrained critical points of the {\em action functional} $I:H^1(\G)\to\R$
\begin{equation}
\label{action}
I(u,\G):=\frac{1}{2}\int_\G|u'|^2\,dx-\frac{1}{p+1}\int_\G|u|^{p+1}\,dx+\frac{\lambda}{2}\int_\G|u|^2\,dx\,.
\end{equation}
This approach has been exploited in \cite{Pa} in the case of periodic graphs, and in \cite{GKP,KP-JDE,KP-JPA} on star--graphs. Precisely, in \cite{Pa}, minimization on a generalized Nehari manifold is performed to show existence of least action solutions, whereas in \cite{GKP,KP-JDE,KP-JPA} the focus is set on stability properties of specific critical points of the functional. 

Our work here fits in the investigation of the action functional \eqref{action}. Let us now describe informally the main results of the paper, redirecting to the next section for the precise setting and statements.

In what follows, we restrict our attention to compact graphs with at least one terminal edge, that is an edge ending with a vertex of degree 1. Our aim is twofold.

On one side, it is easy to show that a solution of \eqref{nlseq} can always be found minimizing the action on a suitable Nehari manifold. Hence, we concentrate on low action positive solutions and, given $\lambda$ large enough, we provide a profile description for such states. Specifically, we show that these solutions are strongly affected by the presence of a terminal edge, as it can be proved that they reach their maximum at a vertex of degree 1, whereas they are small in some norm outside of the corresponding terminal edge (Theorem \ref{thm:profile}).

On the other side, as soon as $\lambda$ is sufficiently large, a Ljapunov--Schmidt reduction procedure is performed. Exploiting the topological assumption ensuring at least a terminal edge, we construct one--peaked (Theorem \ref{thm:1picco}) and multipeaked (Theorem \ref{thm:multi}) solutions to \eqref{nlseq}, i.e. solutions with one or more maximum points at the vertices of degree 1, respectively, and negligibly small on the rest of the graph. 

The existence of such highly concentrated states testifies the dependence of the problem on the topology of the underlying graphs, which is a common feature of these kind of problems on graphs (just to name an example in the framework of compact graphs, the role of terminal edges in existence issues for the mass--constrained case has been pointed out in \cite{D-jde}). 

Let us highlight that several perspectives can be raised following up the aforementioned results. It is for instance unclear if functions
sharing the minimal action can be further characterized, and if the metric of $\G$ affects such
minimizers. In the case of multiple terminal edges, we expect solutions of least action to attain their peak on the longest among these edges, but up to now we are not able to provide a proof of this conjecture.

Another natural question concerns the possibility of adapting our construction to graphs without terminal edges, exhibiting states with peaks in the interior of any given edge. With respect to this, we believe that the profile description of low action solutions as given in Theorem \ref{thm:profile} generalizes straightforwardly, leading to a similar result for graphs with no terminal edges and solutions concentrated on an internal edge. Conversely, it seems to us that further work might be necessary for the Ljapunov--Schmidt scheme of Theorems \ref{thm:1picco}--\ref{thm:multi}. Indeed, it is not clear what suitable model function has to be considered in the absence of terminal edges, so that we expect nontrivial modifications of the argument to be required so to build peaked solutions with maximum points inside a general edge.

Finally, it remains an open problem to understand whether a profile description analogous to the one in Theorem \ref{thm:profile} can be 
given when we minimize the energy functional under a mass constraint.We notice that, in the context of mass--constrained critical points, solutions attaining their maximum only inside a given edge have been constructed in \cite{AST-bound}, provided the mass is sufficiently large. In that paper, such existence result is achieved through the analysis of a doubly--constrained minimization problem. We wonder whether the methods we introduce in the present work could be adapted to recover and generalize those conclusions.

\medskip
The paper is organised as follows. In Section \ref{sec:pre} we recall some known facts and we state in details our main results. Section \ref{sec:Profile} is devoted to the profile description of low action solutions, developing the proof of Theorem \ref{thm:profile}, whereas Section \ref{sec:LS} carries on the construction of peaked solutions as in Theorems \ref{thm:1picco}--\ref{thm:multi}.

\section{Setting and main results}
\label{sec:pre}

Before going further, let us briefly recall some definitions and some notation about metric graphs (for a standard reference see for instance the monograph \cite{BK}).

Throughout the paper, $\mathcal{G}=(V,\,E)$ denotes a compact graph, i.e. the union of a finite number of vertices $\vv\in V$ and edges $e\in E$, each one identified with an interval $I_{e}=[0,l_{e}]$ of finite length $l_{e}>0$. The degree of a vertex is the number of
edges entering it.

In what follows, we always assume that $\G$ has at least one terminal edge, that is an edge ending with a vertex of degree 1.

A function $u$ supported on $\mathcal{G}$ can be viewed as a bunch
of functions $u=(u_{e})_{e\in E}$ where 
\[
u_{e}:I_{e}\rightarrow\mathbb{R}
\]
and, since the graph $\mathcal{G}$ inherits the metric from its
edges, we can easily define the Lebesgue space $L^p(\G)$ of functions such that
\[
|u|_{L^{p}(\mathcal{G})}^p:=\sum_{e}|u_{e}|_{L^{p}([0,l_{e}])}^p<+\infty\,.
\]
A continuous function on $\mathcal{G}$ is a function which is continuous
on every edge and such that, if two edges $e$ and $f$ meet at
a vertex $\vv\in V$, then $u_{e}$ and $u_{f}$ have the same value
at $\vv$. We can also define the Sobolev space $H^{1}(\mathcal{G})$
as follows: 
\[
H^{1}(\mathcal{G})=\left\{ u=(u_{e})_{e}\in C^{0}(\mathcal{G})\ ,\ u_{e}\in H^{1}(I_{e})\text{ for all }e\right\} ,
\]
endowed with the norm
\[
\|u\|_{H^{1}(\mathcal{G})}^{2}=\sum_{e}\|u_{e}\|_{H^{1}(I_{e})}^{2}.
\]
Finally, the following Kirchhoff
condition is considered at the vertices of $\G$
\[
\sum_{e\prec \vv}\frac{du_{e}}{dx}(\vv)=0\,,\qquad\forall\vv\in V\,.
\]
Here, the symbol $e\prec \vv$ indicates every edge $e$ incident at $\vv$, and
we use the convention that 
\[
\frac{du_{e}}{dx}(\vv)=u'(0)\text{ or }\frac{du_{e}}{dx}(\vv)=-u'(l_{e})
\]
according to whether the $x$ coordinate is equal to $0$ or $l_{e}$
at $v$. 

We want to study the positive
solutions of the problem 
\begin{equation}
\left\{ \begin{array}{cc}
-u''+\lambda u=|u|^{p-1}u & \text{ in }\mathcal{G}\\
\sum_{e\prec \vv}\frac{du_{e}}{dx}(\vv)=0 & \forall \vv\in V\\
|u|_{L^{2}(\mathcal{G})}=\rho & u\in H^{1}(\mathcal{G})
\end{array}\right.\label{eq:P}
\end{equation}
on a graph with terminal edges. Here $p>1$ and $\lambda,\rho>0$
are given. Since $\lambda>0$, we endow $H^{1}(\mathcal{G})$ with
the following equivalent scalar product
\[
\left\langle u,v\right\rangle _{\lambda}=\int_{\mathcal{G}}u'(x)v'(x)dx+\lambda\int_{\mathcal{G}}u(x)v(x)dx.
\]
From now on, unless otherwise specified, we will always consider this
product (and its related norm $\|\cdot\|_{\lambda}$) as the scalar
product (and the norm) on $H^{1}(\mathcal{G})$

In order to find positive solutions of (\ref{eq:P}), we modify the action functional considering 
$I^+:H^1(\G)\to\R$
\begin{equation}
\label{action+}
I^+(u,\G):=\frac{1}{2}\int_\G|u'|^2\,dx-\frac{1}{p+1}\int_\G|u^+|^{p+1}\,dx+\frac{\lambda}{2}\int_\G|u|^2\,dx\,.
\end{equation}
In fact, any critical point of $I^+$ is a solution of 
\begin{equation}
\left\{ \begin{array}{cc}
-u''+\lambda u=(u^+)^{p} & \text{ in }\mathcal{G}\\
\sum_{e\prec \vv}\frac{du_{e}}{dx}(\vv)=0 & \forall \vv\in V\\
|u|_{L^{2}(\mathcal{G})}=\rho & u\in H^{1}(\mathcal{G})
\end{array}\right.\label{eq:P+}
\end{equation}
and, of course any positive solution of (\ref{eq:P}) is also a solution of (\ref{eq:P+}). 
One can prove also that any nontrivial solution of (\ref{eq:P+}) is a positive solution of 
(\ref{eq:P}), so any nontrivial critical point of $I^+$ is a positive solution of (\ref{eq:P}). 
To do that, it is sufficient to show that any solution 
$\bar u\not\equiv0$ of 
(\ref{eq:P+}) is strictly positive. We know that $\bar u$ as a minimum point $P\in \mathcal{G}$. 
By contradiction, let us suppose that $\bar u(P)\le 0$. If $P$ lies in the interior of some edge, then
$$
0\le \bar{u}''(P)=\lambda \bar{u}-(\bar{u}^+)^{p}=\lambda \bar{u}\le 0, 
$$ 
so $\bar{u}''(P)=\bar{u}'(P)=\bar{u}(P)=0$ and by the Cauchy theorem $\bar{u}\equiv0$ on the whole edge. Then, by Kirchhoff node condition we can prove that $\bar{u}\equiv 0 $ on $\mathcal{G}$, 
which is a contradiction. 
On the other hand, if $P$ coincides with a terminal vertex, we have that either $\bar{u}\equiv 0$ or $\bar{u}'(P)=0$, $\bar{u}(P)<0$ and $\bar{u}''(P)>0$, and $P$ cannot be a minimum point.  
If $P$ coincides with and internal vertex, a similar argument applies and we get the proof. 

Now, for every $\lambda>0$, let 
\begin{equation}\label{Jlambda}
J_\lambda(u):=\lambda^{\frac{1}{2}-\frac{p+1}{p-1}}I^+(u)
\end{equation}
be the renormalized action functional,  and consider the associated Nehari manifold
\[
\mathcal{N}_{\lambda}:=\left\{ u\in H^{1}(\mathcal{G})\smallsetminus\left\{ 0\right\} \ :\ J_{\lambda}'(u)[u]=0\right\}\,.
\] 
It is standard to prove that  $\mathcal{N}_{\lambda}$ is a natural constraint, i.e. that any nontrivial solution of (\ref{eq:P}) is a critical point of $J_\lambda$ on $\mathcal{N}_{\lambda}$.

Finally, we recall some useful features of a similar problem on the whole line
$\mathbb{R}$, which provides the model functions to construct solutions
of problem (\ref{eq:P}).

Let us consider 
\begin{equation}
-U''+U=U^{p}\text{ in }\mathbb{R},\ U>0.\label{eq:problimite}
\end{equation}
It is well known (see \cite{Kw}) that this equation admits a unique  -up to translations- solution  in $H^1(\R)$
 which  has the explicit form

\begin{equation}
U(x)=\left(\ensuremath{\frac{p+1}{2}}\right)^{\frac{2}{p-1}}\ensuremath{\left[\cosh\ensuremath{\left(\frac{p-1}{2}x\right)}\right]}^{-\frac{2}{p-1}}\,.\label{eq:bubble}
\end{equation}

We set $m_\infty:=\frac{1}{2}\left(\frac{1}{2}-\frac{1}{p+1}\right)\|U\|_{H^1(\R)}^2$ (see Section \ref{sec:Profile}). 

Notice that uniqueness of $H^1$ solution of (\ref{eq:problimite}) can be easily recovered when the equation 
is set in $\R^+$ (we refer to \cite{Kw} for the standard argument that can be straightforwardly adapted here). In this case any solution in $H^1(\R^+)$ has the form $U(x-x_0)\chi_{[0,+\infty)}$,
where $x_0$ is a suitable translation.

The next theorem gives, for a sufficiently large $\lambda$, a profile description of low action solutions of \eqref{eq:P}. In particular these solutions have a unique peak at a vertex of degree 1, they are similar to a suitable rescaling of $U$ on this edge and negligible in $L^\infty$ norm on the rest of the graph.
\begin{thm}
	\label{thm:profile}Let $\mathcal{G}$ be a compact graph with at least
	one terminal edge and $p>1$. Let $\lambda_{n}\rightarrow\infty$ and let, for
	any $n$, $u_{n}$ be a positive solution of (\ref{eq:P}) 
	with $\left.J_{\lambda_{n}}\right|_{\mathcal{N}_{\lambda_{n}}}(u_{n})\rightarrow m_{\infty}$.
	Then, up to subsequence, $u_{n}$ has a unique maximum point located in a terminal vertex $\vv$. 
	Moreover, denoting by $I=[0,l]$ the terminal edge where $u_n$ attains its maximum (with the convention that  the degree 1 vertex $\vv$ concides with $0$) we have that, while $n\rightarrow 0$
	\begin{enumerate}
	\item $u_n(\vv)\rightarrow +\infty$.
	\item $\lambda_{n}^{\frac{1}{1-p}}u_{n}\left(\frac{x}{\sqrt{\lambda_{n}}}\right)
\chi_l\left(\frac{x}{\sqrt{\lambda_{n}}}\right)\rightarrow U(x)$ weakly in $H^1(\R^+) $ and strongly 
 in $C^0(\R^+)$,
in $C^2_\text{loc}(\R^+)$ and in $L^t_\text{loc}(\R^+)$ for all $t\ge2$. Here $\chi_l$ is a cut off function.
	\item $\lambda_n^{\frac{1}{1-p}}\|u_{n}(x)-\lambda_n^{\frac{1}{p-1}}
	U(x\sqrt{\lambda_n})\|_{C^0([0,l/2])}\rightarrow0$	
	\item For every $l_1\in(0,l)$ and every $0<l_{1}<x\le l$, there exist two constants $c_{1},c_{2}>0$, depending on $l_1$ but independent from $n$, such that 
	\begin{align*}
	&u_{n}(x)  \le c_{1}\lambda_{n}^{\frac{1}{p-1}}e^{-c_{2}\sqrt{\lambda_{n}}x}\text{ on }[l_{1},l]\subset I\,,\\
	&\|u_{n}\|_{L^\infty(\G\setminus I)}  \le c_{1}\lambda_{n}^{\frac{1}{p-1}}e^{-c_{2}\sqrt{\lambda_{n}}l}\,.
	\end{align*} 
	\end{enumerate}
	\end{thm}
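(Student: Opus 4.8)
The plan is to rescale the problem so that the dilated solutions solve the limiting equation \eqref{eq:problimite}, blow up at the maximum point, and run a concentration--compactness argument on the sequence of dilated graphs; the crux is a lower bound on the limit profile that forces concentration precisely at a degree-1 vertex.

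First I would set $v_n(y):=\lambda_n^{\frac{1}{1-p}}u_n\!\left(y/\sqrt{\lambda_n}\right)$, which solves $-v_n''+v_n=v_n^p$ on the dilated graph $\G_n:=\sqrt{\lambda_n}\,\G$ (same combinatorial structure, each edge length multiplied by $\sqrt{\lambda_n}$, so every terminal edge has length diverging to $+\infty$). A direct computation using the Nehari identity shows that on $\mathcal{N}_{\lambda_n}$ one has $J_{\lambda_n}(u_n)=\left(\tfrac12-\tfrac{1}{p+1}\right)\|v_n\|_{H^1(\G_n)}^2$, so the hypothesis $J_{\lambda_n}(u_n)\to m_\infty$ translates into $\|v_n\|_{H^1(\G_n)}^2\to\tfrac12\|U\|_{H^1(\R)}^2$: the dilated solutions carry exactly the $H^1$-norm of one half-soliton. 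Combining the Nehari identity $\|v_n\|_{H^1(\G_n)}^2=\int_{\G_n}v_n^{p+1}$ with the one-dimensional Gagliardo--Nirenberg inequality then gives the two-sided control $1\le\|v_n\|_{L^\infty(\G_n)}\le C$.

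Next I would blow up at the peak. Let $P_n$ be a maximum point of $v_n$ and read $v_n$ in a chart centred at $P_n$. Because the edge lengths of $\G_n$ diverge, along a subsequence exactly one of three limit geometries occurs, according to whether (in the metric of $\G_n$) $P_n$ stays at bounded distance from no vertex, from a terminal vertex, or from an internal vertex of degree $d\ge 2$: the recentred solutions converge in $C^2_\text{loc}$ to a nontrivial nonnegative solution $V$ of $-V''+V=V^p$ on $\R$, on $\R^+$ with Neumann datum (the Kirchhoff condition at a degree-1 vertex), or on the $d$-star $S_d$, respectively, and weak lower semicontinuity gives $\|V\|_{H^1}^2\le\liminf\|v_n\|_{H^1(\G_n)}^2=\tfrac12\|U\|_{H^1(\R)}^2$. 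The heart of the argument, and the step I expect to be the main obstacle, is the selection lemma: every nontrivial nonnegative $H^1$-solution of the limit equation on $\R$ and on $S_d$ with $d\ge 2$ has $H^1$-norm strictly larger than $\tfrac12\|U\|_{H^1(\R)}^2$, whereas on $\R^+$ the unique such solution is the half-soliton $U|_{\R^+}$, of norm exactly $\tfrac12\|U\|_{H^1(\R)}^2$. On $\R$ this is immediate since $\|U\|^2>\tfrac12\|U\|^2$ (and $S_2$ reduces to $\R$ through its transparent Kirchhoff vertex); the genuinely delicate case is $S_d$ with $d\ge 3$, where one classifies the positive solutions as $V_e=U(\cdot-x_e)|_{\R^+}$ subject to continuity and Kirchhoff and checks that the total energy strictly exceeds the half-soliton level. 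Granting this, Para 2 forces $P_n$ to approach a terminal vertex $\vv$ with $V=U|_{\R^+}$; the norm equality $\|V\|^2=\lim\|v_n\|^2$ upgrades the convergence from weak to strong and rules out both escaping mass and a second bubble (which would contribute another $\tfrac12\|U\|^2$, exceeding the budget). This gives item 2, and since $\max V=U(0)$ is attained at the Neumann endpoint, $u_n(\vv)=\lambda_n^{\frac{1}{p-1}}v_n(0)\to+\infty$, i.e. item 1; strict concavity of $U$ at $0$ with $C^2_\text{loc}$ convergence yields uniqueness of the maximum at $\vv$.

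For the decay in item 4, once $v_n\to 0$ uniformly away from $\vv$ the nonlinear term is absorbed: on $[l_1\sqrt{\lambda_n},\,l\sqrt{\lambda_n}]$ one has $v_n^{p-1}\le\tfrac12$ for $n$ large, hence $-v_n''+\tfrac12 v_n\le 0$, and a barrier comparison with $e^{-y/2}$ via the maximum principle gives $v_n(y)\le c_1 e^{-c_2 y}$; undoing the scaling produces the stated estimate on $[l_1,l]$, while propagating the smallness of $v_n$ at the far endpoint of $I$ through the remaining edges by the maximum principle on $\G\setminus I$ gives the $L^\infty(\G\setminus I)$ bound. Finally, item 3 is the uniform-in-$y$ refinement of item 2 on the growing interval $[0,\sqrt{\lambda_n}\,l/2]$: it follows by combining the $C^0_\text{loc}$ convergence near $\vv$ on compact sets with the exponential decay of both $v_n$ and $U$, which makes the two profiles uniformly close on the whole half-edge. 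The essential difficulty remains the selection step, i.e. the sharp lower bound on the action of limit profiles over $\R$ and the stars $S_d$.
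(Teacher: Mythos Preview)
Your proposal is correct and follows essentially the same blow--up/selection strategy as the paper: rescale, show that the limit profile at the maximum must be the half--soliton on $\R^+$ because every other limit geometry (full line or $d$--star with $d\ge3$) carries action strictly above $m_\infty$, upgrade to strong convergence by norm matching to rule out a second bubble, and obtain the decay by a comparison/barrier argument. The only noteworthy difference is that where you propose to classify \emph{all} positive $H^1$ solutions on the star $S_d$, the paper shortcuts this step by using that the recentred limit has vanishing derivative on the edge carrying the maximum (it is the blow--up of a maximum point), which together with continuity forces every edge translate to be $0$ and yields total norm $d\cdot\tfrac12\|U\|_{H^1(\R)}^2$ directly.
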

We point out that the assumption $\left.J_{\lambda_{n}}\right|_{\mathcal{N}_{\lambda_{n}}}(u_{n}) \to m_{\infty}$ is consistent, as the sets of solutions $u_n$ fulfilling it is actually not empty (see Section \ref{sec:Profile} and Corollary \ref{cor:limite}).

Reversing the perspective, whenever $\G$ has at least a vertex of degree 1 and again for large $\lambda$, it is possible to construct one-peaked
solutions to problem (\ref{eq:P}), using the function $U$ as a model,
and, if $\mathcal{G}$ has several terminal edges, then it is possible to
construct multipeaked solution. This is what is stated in the following two theorems.
\begin{thm}
	\label{thm:1picco}Let $\mathcal{G}$ be a compact graph with a vertex
	$\vv_{1}$ with degree $1$ and $p>1$. Denote by $I_1=[0,l_1]$ the terminal edge ending at $v_1$, with the convention that $v_1$ coincides with 0. Then, provided $\lambda$ is sufficiently large, 
	there exists a solution $u_{\lambda}$ of (\ref{eq:P}) with a single
	peak at $\vv_{1}$, i.e. $u_\lambda$ of the form
	\[
	u_\lambda:=W_\lambda+\phi\,, 
	\]
	with
	\[
	W_\lambda(x)=\chi(x) U_\lambda(x)
	\]
	where $\chi$ is a smooth cut--off function supported on $[0,l]\subset I_1$, for some $l<l_1$, and 
	\[
	U_\lambda(x)=\begin{cases}
	\lambda^{\frac{1}{p-1}}U(\lambda x) & \text{on }I_1\\
	0 & \text{on }\G\setminus I_1\,,
	\end{cases}
	\]
	$U$ being as in \eqref{eq:bubble}, and
	\[
	\|\phi\|_\lambda=O(\lambda^{-\alpha})
	\]
	for every $\alpha>0$. Furthermore, 
	\[
	\rho^{2}:=|u_{\lambda}|_{L^{2}(\mathcal{G})}^2=C\lambda^{\frac{5-p}{2(p-1)}}+l.o.t.
	\]
\end{thm}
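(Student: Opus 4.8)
The plan is to realize $u_\lambda$ as an \emph{unconstrained} critical point of the functional $I^+$ in \eqref{action+} for fixed large $\lambda$: by the maximum-principle argument recalled before \eqref{Jlambda}, any nontrivial solution of \eqref{eq:P+} is strictly positive, hence a positive solution of \eqref{eq:P}, and the value $\rho$ is simply read off a posteriori as $|u_\lambda|_{L^2(\G)}$. Under the rescaling $u(x)=\lambda^{\frac{1}{p-1}}v(\sqrt\lambda\,x)$ the equation $-u''+\lambda u=(u^+)^p$ becomes the autonomous problem $-v''+v=(v^+)^p$, and on the terminal edge $I_1$, read in the variable $y=\sqrt\lambda\,x\in[0,\sqrt\lambda\,l_1]$, the approximate profile $W_\lambda$ turns into $\chi(y/\sqrt\lambda)\,U(y)$ --- i.e. the half-line soliton $U|_{[0,\infty)}$ truncated deep in its exponential tail. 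The relevant model is therefore \eqref{eq:problimite} on $\R^+$ with the Neumann condition $v'(0)=0$ induced at the degree-$1$ vertex $\vv_1$ by Kirchhoff, whose unique positive solution is $U|_{[0,\infty)}$.

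First I would estimate the error $E_\lambda:=I^{+\prime}(W_\lambda)$. Since $U_\lambda$ solves the equation identically on $I_1$, the residual is supported where the cut-off $\chi$ varies, at distance of order $l$ from $\vv_1$, and there $U_\lambda$ together with its derivatives is of size $e^{-c\sqrt\lambda\,l}$. Consequently $\|E_\lambda\|_\lambda$ is exponentially small in $\sqrt\lambda$, in particular $O(\lambda^{-\alpha})$ for every $\alpha>0$, which is precisely the size we want for $\phi$.

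The heart of the argument is the uniform (in the rescaled norm) invertibility of the linearized operator $L_\lambda\phi:=-\phi''+\lambda\phi-p\,(W_\lambda)^{p-1}\phi$ subject to the Kirchhoff conditions. The crucial structural input is the nondegeneracy of the model: the kernel of $-\partial_y^2+1-pU^{p-1}$ on all of $\R$ is spanned by the \emph{odd} function $U'$, which fails the Neumann condition at $\vv_1$ since $U''(0)=U(0)-U(0)^p\neq0$ for $p>1$; hence the corresponding Neumann operator on $\R^+$ has trivial kernel. I would transport this to $L_\lambda$ by a rescaling-and-contradiction scheme: normalizing a hypothetical sequence witnessing non-invertibility and rescaling it around $\vv_1$, one extracts a limit lying in the kernel of the Neumann model, hence zero, while the coercivity of $-\partial_x^2+\lambda$ away from $\vv_1$ prevents the norm from escaping there --- contradicting the normalization. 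This yields $\|L_\lambda^{-1}\|=O(1)$ in the rescaled norm. Because the kernel is trivial and the peak is pinned at $\vv_1$, no genuine finite-dimensional reduction survives: the Ljapunov--Schmidt scheme collapses to a fixed-point problem. One writes $I^{+\prime}(W_\lambda+\phi)=0$ as $\phi=L_\lambda^{-1}(-E_\lambda+N(\phi))=:T(\phi)$, with $N(\phi)$ the superquadratic remainder $((W_\lambda+\phi)^+)^p-(W_\lambda)^p-p(W_\lambda)^{p-1}\phi$, and checks that $T$ contracts a ball of radius $\sim\|E_\lambda\|_\lambda$. The fixed point is the unique $\phi$ with $\|\phi\|_\lambda=O(\|E_\lambda\|_\lambda)=O(\lambda^{-\alpha})$ for every $\alpha>0$; for $\lambda$ large $u_\lambda=W_\lambda+\phi\not\equiv0$, so it is positive by the argument above.

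Finally the mass follows by expanding $|u_\lambda|_{L^2}^2=|W_\lambda|_{L^2}^2+2\langle W_\lambda,\phi\rangle_{L^2}+|\phi|_{L^2}^2$: a direct computation gives $|U_\lambda|_{L^2(\G)}^2=\tfrac12\|U\|_{L^2(\R)}^2\,\lambda^{\frac{5-p}{2(p-1)}}$ up to an exponentially small cut-off correction, while the inequality $|\phi|_{L^2}\le\lambda^{-1/2}\|\phi\|_\lambda$ renders the cross term and $|\phi|_{L^2}^2$ of lower order, whence $\rho^2=C\lambda^{\frac{5-p}{2(p-1)}}+\text{l.o.t.}$ with $C=\tfrac12\|U\|_{L^2(\R)}^2$. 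The step I expect to be the main obstacle is the uniform invertibility of $L_\lambda$: one has to make the rescaling-and-contradiction argument rigorous over the whole of $\G$, simultaneously controlling the concentration region at $\vv_1$, the genuinely graph-dependent (though exponentially small) contributions of the other edges and the interior Kirchhoff vertices, and the possibility that the cut-off manufactures a spurious near-kernel.
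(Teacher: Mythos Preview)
Your proposal is correct and follows essentially the same approach as the paper: the paper likewise observes that the Neumann linearized problem \eqref{eq:linearizzato} on $\R^+$ has trivial kernel (so the Ljapunov--Schmidt reduction degenerates to a pure fixed-point argument), proves uniform invertibility of the linear operator by exactly the rescaling-and-contradiction scheme you outline (Lemma~\ref{lem:Linv}), shows the residual is $O(\lambda^{-\alpha})$ for every $\alpha>0$ (Proposition~\ref{prop:resto}), and closes with a contraction mapping on a ball of that radius. The only cosmetic difference is that the paper recasts the equation through the adjoint $i_\lambda^*$ of the compact embedding $H^1(\G)\hookrightarrow L^2(\G)$, writing $\mathcal{L}_\lambda(\phi)=\phi-i_\lambda^*(f'(W_\lambda)\phi)$, rather than working directly with the differential operator $L_\lambda$ as you do; the two formulations are equivalent.
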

\begin{thm}
	\label{thm:multi}
	Let $\mathcal{G}$ be a compact graph with $m\ge1$ vertices
	with degree $1$ and $p>1$. Choose $\vv_{1},\dots,\vv_{k}$ vertices of degree $1$
	with $1\le k\le m$. Let also $I_i=[0,l_i]$ denote the terminal edge ending at $v_i$, with the convention that $v_i$ coincides with 0.
	Then, provided $\lambda$ is sufficiently large, there
	exists a $k$-peaked solution $u_\lambda$ of (\ref{eq:P}) with a single
	peak at any vertex $\vv_{i}$, $i=1,\dots,k$, i.e. $u_\lambda$ of the form
	\[
	u_\lambda=W_\lambda+\phi\,,
	\]
	with
	\[
	W_\lambda(x)=\sum_{i=1}^k\chi_i(x)U_{\lambda,i}(x)
	\]
	where $\chi_i$ is a smooth cut--off function supported on $[0,l]\subset I_i$, for some $l<\min_{1\leq i\leq k}l_i$, and 
	\[
	U_{\lambda,i}(x)=\begin{cases}
	\lambda^{\frac{1}{p-1}}U(\lambda x) & \text{on }I_i\\
	0 & \text{on }\G\setminus I_i\,,
	\end{cases}
	\]
	$U$ being as in \eqref{eq:bubble}, and
	\[
	\|\phi\|_\lambda=O(\lambda^{-\alpha})
	\]
	for every $\alpha>0$. Furthermore, 
	\[
	\rho^{2}:=|u|_{L^{2}(\mathcal{G})}^2=C\lambda^{\frac{5-p}{2(p-1)}}+l.o.t.
	\]
\end{thm}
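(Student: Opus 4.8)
The plan is to carry out the Ljapunov--Schmidt scheme for the functional $I^+$ in \eqref{action+}, writing the sought solution as $u_\lambda=W_\lambda+\phi$ and solving $\left(I^+\right)'(W_\lambda+\phi)=0$ for a small remainder $\phi$. The decisive structural fact, which I would isolate first, is that the peaks are \emph{pinned} at the degree--$1$ vertices: by \cite{Kw} the only nontrivial element of the kernel of the limit operator $L_\infty\psi:=-\psi''+\psi-pU^{p-1}\psi$ on $\R$ is the translation mode $U'$, which is odd and therefore violates the Kirchhoff (Neumann) condition $\psi'(0)=0$ at a terminal vertex. Hence $L_\infty$ is \emph{nondegenerate} on $\R^+$ with that boundary condition, the reduced finite--dimensional problem is trivial, and the construction amounts to a direct fixed--point argument. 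Since the model profiles $U_{\lambda,i}$ are supported on \emph{distinct} terminal edges, the multipeak case $k>1$ differs from Theorem~\ref{thm:1picco} only through interactions between different bubbles, which decay exponentially; I would therefore treat the single-- and multi--peak cases in parallel, remarking that the cross terms are negligible.

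First I would estimate the error $R_\lambda:=\left(I^+\right)'(W_\lambda)$. Each $\chi_iU_{\lambda,i}$ solves the equation exactly where $\chi_i\equiv1$, so $R_\lambda$ is supported in the cut--off transition zones, where $x\asymp l$ is far from the peak at the vertex and the rescaled profile is already of order $e^{-c\sqrt\lambda}$; the same exponential smallness controls the overlap of distinct $U_{\lambda,i}$'s. Testing against $H^1(\G)$ functions and using the exponential decay of $U$ and its derivatives, I expect $\|R_\lambda\|_\lambda=O(e^{-c\sqrt\lambda})=O(\lambda^{-\alpha})$ for every $\alpha>0$, which is the origin of the stated bound on $\phi$.

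The core step is the \emph{uniform invertibility} of the linearized operator $L_\lambda:=\left(I^+\right)''(W_\lambda)$ on $H^1(\G)$, endowed with $\langle\cdot,\cdot\rangle_\lambda$. I would prove a coercivity estimate $\|L_\lambda\phi\|_\lambda\ge c\,\|\phi\|_\lambda$, uniform in $\lambda$ large, arguing by contradiction: given a normalized sequence $\phi_n$ with $\|\phi_n\|_{\lambda_n}=1$ and $L_{\lambda_n}\phi_n\to0$, I would rescale around each chosen vertex $\vv_i$ and pass to the limit. By the exponential decay any mass away from the peaks is negligible, so $\phi_n$ concentrates at the peaks and its rescalings converge to elements of $\ker L_\infty$ on $\R^+$ with Neumann condition; nondegeneracy forces these limits to vanish, contradicting $\|\phi_n\|_{\lambda_n}=1$. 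This is the step I expect to be the main obstacle, since one must simultaneously rule out concentration in the cut--off regions and along the rest of the graph and control all $k$ peaks at once.

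With the error estimate and the uniform invertibility in hand, I would recast the equation as the fixed--point problem $\phi=-L_\lambda^{-1}\!\left(R_\lambda+N_\lambda(\phi)\right)$, where $N_\lambda(\phi)$ collects the superlinear remainder of the nonlinearity. A standard contraction argument on a ball of radius $O(\lambda^{-\alpha})$ then yields a unique small $\phi$ with $\|\phi\|_\lambda=O(\lambda^{-\alpha})$ for every $\alpha>0$; positivity of $u_\lambda$ follows from the sign argument already used for \eqref{eq:P+}. Finally, I would compute the mass: since the $U_{\lambda,i}$ live on disjoint edges, a change of variables gives $|W_\lambda|_{L^2(\G)}^2=k\,\|U\|_{L^2(\R^+)}^2\,\lambda^{\frac{5-p}{2(p-1)}}+\text{l.o.t.}$, while the contributions of $\phi$ and of the cut--offs are of lower order, producing $\rho^2=C\lambda^{\frac{5-p}{2(p-1)}}+\text{l.o.t.}$ as claimed.
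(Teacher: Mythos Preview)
Your proposal is correct and follows essentially the same route as the paper: the authors also reduce to a direct contraction argument (no finite--dimensional reduced problem) precisely because the Neumann condition at the terminal vertex kills the translation mode $U'$, prove the uniform invertibility of $\mathcal{L}_\lambda$ by the same contradiction--and--rescaling argument you outline (Lemma~\ref{lem:Linv-1}), show $\|R_\lambda\|_\lambda=O(\lambda^{-\alpha})$ from the exponential decay of $U$ in the cut--off annuli (Proposition~\ref{prop:resto-1}), and then run the contraction exactly as you describe; the mass computation is identical. The only cosmetic difference is that the paper phrases the linearization through the adjoint map $i_\lambda^*$ rather than $(I^+)''$, and it notes explicitly that since the $k$ bubbles have disjoint supports the multipeak proof is ``verbatim'' the one--peak proof, with the invertibility lemma handled by observing that at least one edge must carry nonvanishing mass and rescaling there.
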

In Section \ref{sec:LS} we will show
this construction based on the Ljapunov--Schmidt finite dimensional
reduction. Again this procedure is possible for every $p$, and the
link between $\lambda$ and $\rho$ is explicit. This gives also
another interpretation of $L^{2}$-critical exponent $p=5$.

\begin{rem} A further observation about one-peaked solutions is possible. 
Given a sequence $\lambda_n\rightarrow +\infty$, let, $u_{\lambda_n}:=W_{\lambda_n}+\phi_n$ be the corresponding 
one-peaked solution obtained by Theorem \ref{thm:1picco} for any 
$\lambda_n$. The sequence 
$\{u_{\lambda_n}\}_n$ fulfills the hypothesis of Theorem \ref{thm:profile}, so it inherits all the properties 
given by Theorem \ref{thm:profile}: the exact location of the unique maximum point, the decreasing 
monotonicity, the decay rate and so on.
\end{rem}

%{\color{red}
%These pairing between variational and Ljapunov--Schmidt solutions
%is almost perfect. The matching is not yet complete since the remainder
%term is estimated in
%$L^{\infty}$ norm in Theorem \ref{thm:profile} (see Section \ref{sec:Profile}) and in $H^{1}$ norm in Theorem %\ref{thm:1picco} (see Section \ref{sec:LS}). However, in Corollary
%\ref{cor:limite} we give an estimate for the level of least action
%solutions, that clarifies the complete picture.\footnote{qui forse basta dire che le soluzioni del teorema 2 si possono infilare nel teorema 1 per avere un raffronto} }

\subsection{Notations}
Herafter we will use the following recurrent notations.
\begin{itemize}
\item $B_{P,r}=B(P,r)$ is the ball centered at $P$ with radius $r$. We use the same notation either if 
$B_{P,r}\subset\R$ or $B_{P,r}\subset\R^+$. In the last case, if $0\le P <r$ we intend $B_{P,r}=\{0\le x<P+ r\}$. 
Finally, $B_r:=B(0,r)$.
\item $\chi_\rho$ is a smooth cut--off function such that  $\chi_\rho=1$ when $x\in B_{\rho/2}$ and   $\chi_\rho=0$ outside a 
ball of radius $\rho$.
 When no ambiguity is possible we will omit the subscript $\rho$.
\item $\chi_{[0,+\infty)}$ is the characteristic function of $[0,+\infty)$.
\item With abuse of notation we often identify an edge $I\in \G$ with $[0,l]$, $l$ being the lenght of the edge. When the edge is a terminal one, the vertex $\vv$ of degree $1$ will be identified with $0$. 
\item Given a vertex $\vv\in \mathcal{G}$ we will suppose w.l.o.g. that the degree of that vertex is either $1$ or strictly larger than $2$. In fact, degree 2 vertices are indistinguishable from internal points. 
\end{itemize}

\section{Profile of low action solution\label{sec:Profile}}

As stated in the previous sections, for any $\lambda>0$ a solution of (\ref{eq:P}) can be obtained as a
critical point of the action functional $J_{\lambda}$ defined (see (\ref{Jlambda})) as
\begin{align*}
J_{\lambda}: & H^{1}(\mathcal{G})\rightarrow\mathbb{R}\\
J_{\lambda}(u)=\lambda^{\frac{1}{2}-\frac{p+1}{p-1}} & \int_{\mathcal{G}}\frac{\left(u'\right)^{2}}{2}+\frac{\lambda u^{2}}{2}-\frac{u^{p+1}}{p+1}dx
\end{align*}
on the Nehari manifold
\begin{align}
\mathcal{N}_{\lambda}: & =\left\{ u\in H^{1}(\mathcal{G})\smallsetminus\left\{ 0\right\} \ :\ J_{\lambda}'(u)[u]=0\right\} \nonumber \\
 & =\left\{ u\in H^{1}(\mathcal{G})\smallsetminus\left\{ 0\right\} \ :\ \|u\|_{\lambda}^{2}=|u|_{p+1}^{p+1}\right\} \label{eq:nehari}
\end{align}
It is standard to prove that $\mathcal{N}_{\lambda}$ is a $C^{1}$
manifold and that the Palais-Smale condition
holds on $\mathcal{N}_{\lambda}$. Moreover, by (\ref{eq:nehari})
we have that
\[
\left.J_{\lambda}\right|_{\mathcal{N}_{\lambda}}(u)=\lambda^{\frac{1}{2}-\frac{p+1}{p-1}}\left(\frac{1}{2}-\frac{1}{p+1}\right)\|u\|_{\lambda}^{2}.
\]
The Nehari manifold is not empty, in fact, problem (\ref{eq:P})
admits a constant solution. Also, any solution $u_{\lambda}$ that
we will find in Section \ref{sec:LS} belongs to $\mathcal{N}_{\lambda}$. 

One can easily prove that $\inf_{\mathcal{N}_{\lambda}}J_{\lambda}>0$
and, since Palais-Smale holds, that a non trivial minimizer exists.
We set 
\[
m_{\lambda}:=\inf_{\lambda}\left.J_{\lambda}\right|_{\mathcal{N}_{\lambda}}>0.
\]
The one peaked solution of Section \ref{sec:LS} allows also to estimate
$m_{\lambda}$ in term of the $H^{1}(\mathbb{R}^{+})$ norm of the
function $U$ defined in (\ref{eq:bubble}). Let us take $u_{\lambda}$
a one-peaked solution given by Theorem \ref{thm:1picco}. Let $I_{1}=[0,l_{1}]$
be the terminal edge where the peak is located, and suppose that the
terminal vertex is in $x=0$. We know that 
\[
u_{\lambda}=W_{\lambda}(x)+\phi
\]
where $W_{\lambda}(x)=\chi(x)U_{\lambda}(x)$, $\chi=1$ if $x\in I_{1}$
and $0\le x\le\delta$, $\chi=0$ if $x\in I_{1}$ and $2\delta\le x\le l_{1}$
for some fixed $\delta$ and 
\[
U_{\lambda}(x)=\left\{ \begin{array}{cc}
\lambda^{\frac{1}{p-1}}U(x\sqrt{\lambda}) & \text{ on }I_{1}\\
0 & \text{elsewhere}
\end{array}\right..
\]
Moreover $\|\phi\|_{\lambda}\le\lambda^{-\alpha}$ for any positive
$\alpha$. Thus we compute 
\[
\left.J_{\lambda}\right|_{\mathcal{N}_{\lambda}}(u_{\lambda})=\lambda^{\frac{1}{2}-\frac{p+1}{p-1}}\left[\left(\frac{1}{2}-\frac{1}{p+1}\right)\|U_{\lambda}\|_{\lambda}^{2}\right]+o(1)=\frac{1}{2}\left(\frac{1}{2}-\frac{1}{p+1}\right)\|U\|_{H^{1}(\mathbb{R})}^{2}+o(1).
\]
 Set 
\begin{eqnarray*}
m_{\infty}:=\frac{1}{2}\left(\frac{1}{2}-\frac{1}{p+1}\right)\|U\|_{H^{1}(\mathbb{R})}^{2} &  & m_{\lambda}:=\inf_{\lambda}\left.J_{\lambda}\right|_{\mathcal{N}_{\lambda}}
\end{eqnarray*}
and we get 
\begin{equation}
0\le\lim_{\lambda\rightarrow\infty}m_{\lambda}\le m_{\infty}.\label{eq:dallalto}
\end{equation}
This proves, also, that it is possible to find a sequence $\{u_n\}_n$ fulfilling the hypothesis of 
Theorem \ref{thm:profile}. We are able, by proving this theorem, to give an asymptotic profile 
description for a positive low action solution of problem (\ref{eq:P}).
 
\begin{proof}[Proof of Theorem \ref{thm:profile}]
The proof is divided in several steps.

\medskip
\noindent \emph{Step 1: For $n$ large $u_{n}$ is not constant.}

Indeed, if $u_{n}\equiv C$, then, by (\ref{eq:P}) necessarily $C=\sqrt[p-1]{\lambda_{n}}.$
Then 
\begin{align*}
\left.J_{\lambda_{n}}\right|_{\mathcal{N}_{\lambda_{n}}}(u_{n}) & =\lambda_{n}^{\frac{1}{2}-\frac{p+1}{p-1}}\left[\left(\frac{1}{2}-\frac{1}{p+1}\right)\|u_n\|_{\lambda}^{2}\right]=\lambda_{n}^{\frac{1}{2}-\frac{p+1}{p-1}}\left[\left(\frac{1}{2}-\frac{1}{p+1}\right)\lambda_{n}^{\frac{p+1}{p-1}}\left|\mathcal{G}\right|\right]\\
 & =\lambda_{n}^{\frac{1}{2}}\left[\left(\frac{1}{2}-\frac{1}{p+1}\right)\left|\mathcal{G}\right|\right]\rightarrow\infty\text{ for }\lambda_{n}\rightarrow\infty,
\end{align*}
where $\left|\mathcal{G}\right|=\int_{\mathcal{G}}1dx$ is the length
of the graph. This contradicts $\left.J_{\lambda_{n}}\right|_{\mathcal{N}_{\lambda_{n}}}(u_{n})\to m_{\infty}$.

\medskip
\noindent \emph{Step 2: $u_{n}$ has a maximum point $P_{n}$. Moreover, $u_{n}(P_{n})\ge\sqrt[p-1]{\lambda}_{n}$.}

First, by standard regularity theory, we have that $u_{n}$ is a regular
solution, that is, for any edge $I\subset\mathcal{G}$, $\left.u_{n}\right|_{I}\in C^{2}(\bar{I})$.
Since $u_{n}$ is not constant, and the graph is compact, $u_{n}$
has a global maximum point $P_{n}\in\mathcal{G}$. 

Now, if $P_{n}$ is on the interior of some edge $I$, we have that
$u_{n}'(P_{n})=0$ and $u_{n}''(P_{n})\le0$. Thus, by (\ref{eq:P})
we get $\lambda u_{n}(P_{n})-u_{n}^{p}(P_{n})=u_{n}''(P_{n})\le0$,
so $u_{n}(P_{n})\ge\sqrt[p-1]{\lambda}_{n}$. 

If $P_{n}$ is assumed on a terminal vertex, again we have $u_{n}'(P_{n})=0$
by Kirchhoff condition, so necessarily we have $u_{n}''(P_{n})\le0.$
Thus again $u_{n}(P_{n})\ge\sqrt[p-1]{\lambda}_{n}$. 

Finally suppose that $P_{n}$ is on a vertex of degree greater than
1. Since $P_{n}$ is a maximum point, $\frac{d\left(u_{n}\right)_{e}}{dx}(P_{n})\le0$
on any edge $e$ that insists on the vertex. Since, by (\ref{eq:P}), 
$\sum_{e\prec P_{n}}\frac{d(u_{n})_{e}}{dx}(P_{n})=0$, we have $\frac{d\left(u_{n}\right)_{e}}{dx}(P_{n})=0.$
At this point there exists at least an edge $e\prec P_{n}$ for which
$(u_{n})_{e}''(P_{n})\le0$ and we conclude as before.

\medskip
\noindent \emph{Step 3: There exists a vertex $\vv\in \mathcal{G}$ such that, up to 
subsequences, $d(P_{n},\vv)\rightarrow 0$ while $n\rightarrow\infty$.}

Suppose, by contradiction, that $\lim_n\inf_{\vv\in \mathcal{G}}d(P_{n},\vv)=\delta> 0$.
Up to subsequences we can suppose that $P_n\in I$ for all $n$ and we can identify $I=[0,l]$ $\vv=0$.
Thus we define 
\[
v_{n}(x):=\lambda_{n}^{\frac{1}{1-p}}u_{n}\left(\frac{x}{\sqrt{\lambda_{n}}}+P_{n}\right)
\chi_\delta\left(\frac{x}{\sqrt{\lambda_{n}}}+P_{n}\right)\text{ for }|x/\sqrt{\lambda_{n}}|\le \delta.
\]
The function $v_{n}$ belongs
to $H^{1}(\mathbb{R})$, moreover
\begin{align*}
\|v_{n}\|_{H^{1}(\mathbb{R})}^{2} & \le C\lambda_{n}^{\frac{2}{1-p}}
\int_{B_{\delta\sqrt{\lambda_{n}}}}\left[\frac{d}{dx}u_{n}\left(\frac{x}{\sqrt{\lambda_{n}}}+P_{n}\right)\right]^{2}+\left[u_{n}\left(\frac{x}{\sqrt{\lambda_{n}}}+P_{n}\right)\right]^{2}dx\\
 & =C\lambda_{n}^{\frac{2}{1-p}}
 \int_{B_{\delta\sqrt{\lambda_{n}}}}\frac{1}{\lambda_{n}}\left(u_{n}'\right)^{2}\left(\frac{x}{\sqrt{\lambda_{n}}}+P_{n}\right)+u_{n}^{2}\left(\frac{x}{\sqrt{\lambda_{n}}}+P_{n}\right)dx\\
 & =C\lambda_{n}^{\frac{p+1}{1-p}}\sqrt{\lambda_{n}}
 \int_{B_{P_{n},\delta}}\left(u_{n}'\right)^{2}\left(x\right)+\lambda_{n}u_{n}^{2}\left(x\right)dx\\
 & \le C\lambda_{n}^{\frac{p+1}{1-p}}\sqrt{\lambda_{n}}\int_{I}\left(u_{n}'\right)^{2}\left(x+P_{n}\right)+\lambda_{n}u_{n}^{2}\left(x+P_{n}\right)dx\\
 & \le C\lambda_{n}^{\frac{p+1}{1-p}}\sqrt{\lambda_{n}}\|u_{n}\|_{\lambda}^{2}\le C\left(\frac{p-1}{2(p+1)}\right)\left.J_{\lambda_{n}}\right|_{\mathcal{N}_{\lambda_{n}}}(u_{n})\le Cm_{\infty.}
\end{align*}
So $\left\{ v_{n}\right\} _{n}$ is bounded in $H^{1}(\mathbb{R})$,
hence there exists $v\in H^{1}(\mathbb{R})$ such that $v_{n}\rightharpoonup v$
weakly in $H^{1}(\mathbb{R})$ and $v_{n}\rightarrow v$ strongly
in $L_{\text{loc}}^{t}(\mathbb{R})$  for any $t\ge2$ and in $C_{\text{loc}}^{0}(\mathbb{R})$.
We want to
prove that $v$ is a nontrivial solution of (\ref{eq:problimite}). 

Take $\varphi\in C_{0}^{\infty}(\mathbb{R})$. For $n$ large we have
that the support $\text{spt}(\varphi)$ of $\varphi$ is contained
in $B_{\frac \delta2\sqrt{\lambda_{n}}}$. We define a sequence of function
$\left\{ \varphi_{n}\right\} _{n}\in H^{1}(\mathcal{G})$ (for $n$
large) as
\[
\varphi_{n}(x)=\left\{ \begin{array}{cc}
\lambda_{n}^{\frac{1}{p-1}}\varphi\left(\sqrt{\lambda_{n}}(x-P_{n})\right) & \text{ on }I\\
0 & \text{elsewhere}\,.
\end{array}\right.
\]
Since $u_{n}$ is a solution of (\ref{eq:P}) we have
\begin{align*}
0= & J'_{\lambda_{n}}(u_{n})[\varphi_{n}]=\int_{I}u_{n}'\varphi_{n}'+\lambda_{n}u_{n}\varphi_{n}-u_{n}^{p}\varphi_{n}dx\\
= & \lambda_{n}^{\frac{2}{p-1}}\int_{I}\frac{d}{dx}v_{n}\left(\sqrt{\lambda_{n}}(x-P_{n})\right)\frac{d}{dx}\varphi\left(\sqrt{\lambda_{n}}(x-P_{n})\right)dx\\
 & +\lambda_{n}^{\frac{2}{p-1}}\lambda_{n}\int_{I}v_{n}\left(\sqrt{\lambda_{n}}(x-P_{n})\right)\varphi\left(\sqrt{\lambda_{n}}(x-P_{n})\right)dx\\
 & -\lambda_{n}^{\frac{p+1}{p-1}}\int_{I}v_{n}\left(\sqrt{\lambda_{n}}(x-P_{n})\right)\varphi\left(\sqrt{\lambda_{n}}(x-P_{n})\right)dx\\
= & \lambda_{n}^{\frac{p+1}{p-1}-\frac{1}{2}}\int_{\mathbb{R}}v_{n}'\varphi'+v_{n}\varphi-v_{n}^{p}\varphi dx,
\end{align*}
so by weak convergence on $H^{1}(\mathbb{R})$ 
\[
\int_{\mathbb{R}}v'\varphi'+v\varphi-v^{p}\varphi=0\text{ for any }\varphi\in C_{0}^{\infty}(\mathbb{R}).
\]
Since, by Step 2, $u_{n}(P_{n})\ge\lambda_{n}^{\frac{1}{p-1}}$ then
$v_{n}(0)=\lambda_{n}^{\frac{1}{1-p}}u_{n}(P_{n})\ge1$, so by $L_{\text{loc}}^{t}$
convergence we can prove that $v\neq0$. Thus, by uniqueness of solutions of (\ref{eq:problimite}) 
we have that $v=U$. This leads to
a contradiction. In fact, there exists $R>0$ such that 
\[
|U|_{L^{p+1}(B_{R})}^{p+1}>\frac{3}{4}|U|_{L^{p+1}(\mathbb{R})}^{p+1}
\]
and, since $v_{n}\rightarrow v=U$ in $L_{\text{loc}}^{p+1}$ there
exists $n_{0}>1$ such that
\[
|v_{n}|_{L^{p+1}(B_{R})}^{p+1}>\frac{3}{4}|U|_{L^{p+1}(\mathbb{R})}^{p+1}\text{ for }n>n_{0}.
\]
On the other hand, there exists $n_{1}>1$ such that, for $n>n_{1}$ it holds
$R/\sqrt{\lambda_{n}}<\delta/2$, so that if $|x|\le R$ then 
$x/\sqrt{\lambda_{n}}+P_{n}\in B_{P_{n},\frac \delta 2}$
and $\chi(x)\equiv1$. So, for $n$ large we have
\begin{align*}
|v_{n}|_{L^{p+1}(B_{R})}^{p+1} & \le\lambda_{n}^{-\frac{p+1}{p-1}}\int_{B_{R}}|u_{n}|^{p+1}(x/\sqrt{\lambda_{n}}+P_{n})dx\le\lambda_{n}^{\frac{1}{2}-\frac{p+1}{p-1}}\int_{B_{P_{n},\delta}}|u_{n}|^{p+1}dx\\
 & \le\lambda_{n}^{\frac{1}{2}-\frac{p+1}{p-1}}|u_{n}|_{L^{p+1}(\mathcal{G})}^{p+1}.
\end{align*}
So 
\begin{align}
\left.J_{\lambda_{n}}\right|_{\mathcal{N}_{\lambda_{n}}}(u_{n}) & =\lambda_{n}^{\frac{1}{2}-\frac{p+1}{p-1}}\left[\left(\frac{1}{2}-\frac{1}{p+1}\right)|u_{n}|_{L^{p+1}(\mathcal{G})}^{p+1}\right]\geq\left[\left(\frac{1}{2}-\frac{1}{p+1}\right)|v_{n}|_{L^{p+1}(B_{R})}^{p+1}\right]\nonumber \\
 & >\frac{3}{4}\left(\frac{1}{2}-\frac{1}{p+1}\right)|U|_{L^{p+1}(\mathbb{R})}^{p+1}=\frac{3}{4}\left(\frac{1}{2}-\frac{1}{p+1}\right)\|U\|_{H^{1}(\mathbb{R})}^{2}=\frac{3}{2}m_{\infty}\label{eq:StimaJn}
\end{align}
that contradicts our assumption, thus implying $\lim_n\inf_{\vv\in\G} d(P_n,\vv)=0$.

\medskip
\noindent \emph{Step 4: Given $\vv$ as in the previous step, we have 
$\lim_n d(P_n,\vv)\sqrt{\lambda_n}=0$.}

Suppose, by contradiction, that $\lim_n d(P_n,\vv)\sqrt{\lambda_n}=\delta >0$. Define

\begin{equation}
\label{eq:w_n}
w_{n}(x):=\lambda_{n}^{\frac{1}{1-p}}u_{n}\left(\frac{x}{\sqrt{\lambda_{n}}}\right)
\chi_l\left(\frac{x}{\sqrt{\lambda_{n}}}\right)\text{ for }0\le x/\sqrt{\lambda_{n}}\le l.
\end{equation}
The function $w_{n}$ belongs
to $H^{1}(\mathbb{R^+})$, and, in analogy with Step 3, we can prove that $w_{n}\rightharpoonup w$
weakly in $H^{1}(\mathbb{R}^+)$ and $w_{n}\rightarrow w$ strongly
in $L_{\text{loc}}^{t}(\mathbb{R}^+)$  for any $t\ge2$ and in  $C_{\text{loc}}^{0}(\mathbb{R}^+)$.
Given $\varphi\in C_{0}^{\infty}((0,+\infty))$, for $n$ large we have
that the support $\text{spt}(\varphi)$ of $\varphi$ is contained
in $B_{\frac l2\sqrt{\lambda_{n}}}$ and we can prove, as before, that 
$w$ is a nontrivial positive solution of (\ref{eq:problimite}) on $\R^+$, 
although we do not know its value at the origin. By uniqueness of solutions of (\ref{eq:problimite}) on $\R^+$, 
we have that $w=U(x-x_0)\chi_{[0,\infty)}$ for some suitable $x_0\in\R$. Since for the maximum point of 
$u_n$ it holds $P_n\sqrt\lambda_n\ge \delta/2>0$, we have that $w$ has a maximum point in $(0,+\infty)$, so $x_0>0$. At this point we can prove, similarly to Step 3, that there exists $K>1$ such that 
$$
\left.J_{\lambda_{n}}\right|_{\mathcal{N}_{\lambda_{n}}}(u_{n}) >Km_\infty
$$
which contradicts our hypothesis.

\medskip
\noindent \emph{Step 5: $\vv$ coincides with an extremal vertex.}

Suppose, by contradiction that $\vv$ is a vertex with degree $k\ge 3$.

To simplify the notation, let $I_{1}=[0,l_{1}],\dots, I_{k}=[0,l_{k}]$
the edges that intersect in $\vv$ and let us suppose that
for any $I_{j}$, coordinates $x_j$ are defined on $I_j$ so that $\vv$ coincides with $x_j=0$, as shown in Figure \ref{figure}.
Suppose, also, that $P_n\in I_1$.
\begin{figure}[t]
\centering
\includegraphics[scale=0.6]{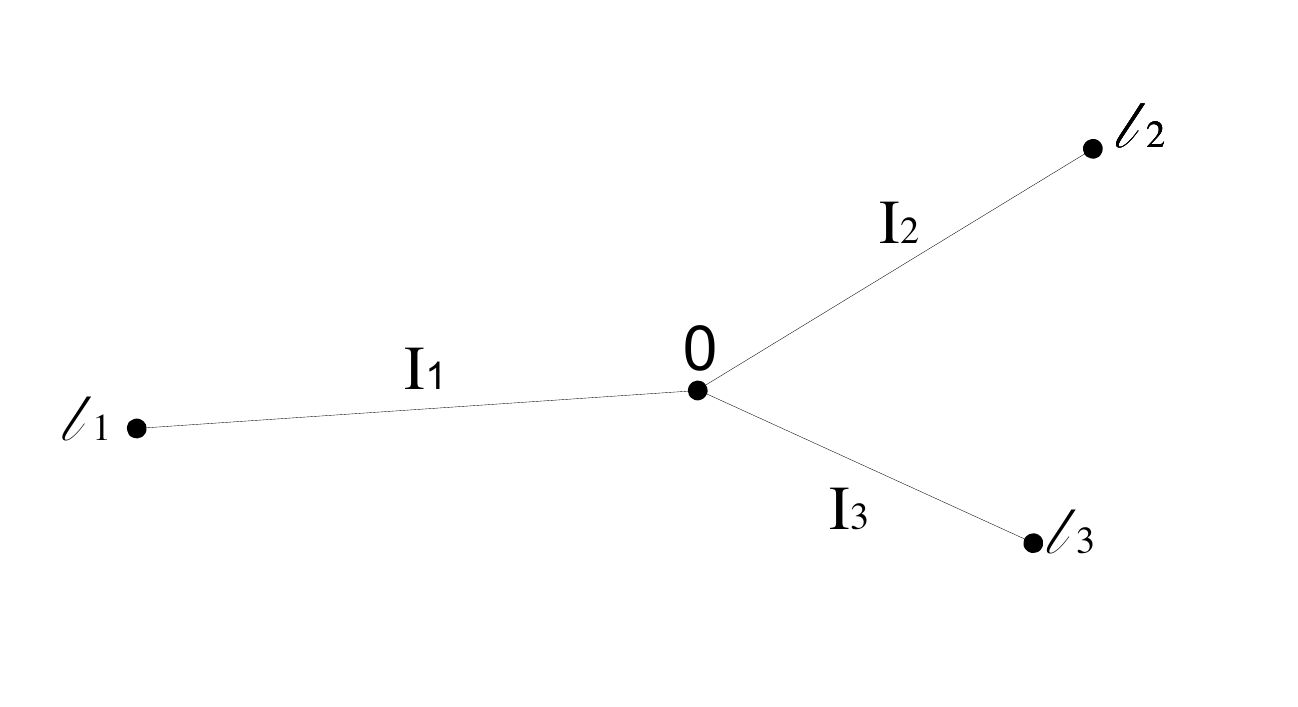}
\caption{example of a labelling of the edges entering a vertex $\vv$ as in Step 5 of the proof of Theorem \ref{thm:profile}.}
\label{figure}
\end{figure}

Choose $\rho<\min_{k}l_{k}$ and define, for $j=1,\dots,k$, $u_n^j:=\left.u_n\right|_{I_j}$ and
\[ 
v_{n}^{j}(x):=\lambda_{n}^{\frac{1}{1-p}}u_{n}^j\left(\frac{x}{\sqrt{\lambda_{n}}}\right)
\chi_\rho\left(\frac{x}{\sqrt{\lambda_{n}}}\right)\text{ for }0\le x/\sqrt{\lambda_{n}}\le\rho.
\]
As before, for any $j$, $\{v_n^j\}_n$ is bounded in $H^1(\R^+)$, and converges to some 
$v^j$ weakly in $H^{1}(\mathbb{R}^+)$ and strongly 
in $L_{\text{loc}}^{t}(\mathbb{R}^+)$ for any $t\ge2$ and in  $C_{\text{loc}}^{0}(\mathbb{R}^+)$.

Given any $R>0$, there exists $n$ sufficiently large such that $R<\rho\sqrt{\lambda_n}/2$, so on $[0,R]$ 
we have that $v^j_n(x)\equiv\lambda_{n}^{\frac{1}{1-p}}u_{n}^j\left(\frac{x}{\sqrt{\lambda_{n}}}\right)$.
Now, since $u_n$ solves (\ref{eq:P}), we have that 
$$
(v_n^j)''=v_n^j-(v_n^j)^p \text{ on }[0,R] 
$$
and, since $v_n^j\rightarrow v^j$ in $C^0([0,R])$, and by the arbitrariness of $R$ we have that 
$v_n^j\rightarrow v^j$ in   $C_{\text{loc}}^{0}(\mathbb{R}^+)$ for all $j$.

Finally $v^j$ is a nontrivial positive solution of (\ref{eq:problimite}) on $\R^+$, so 
$$
v^j(x)=U(x-x_j)\chi_{[0,+\infty)}\text{ for some }x_j\in\R.
$$
We can prove that $x_j=0$ for all $j$. In fact, we have that $P_n$ is a maximum point for $u_n$, so 
$P_n\sqrt{\lambda_n}$ is a maximum point for $v_n^1$, so $(v_n^1)'(P_n\sqrt{\lambda_n})=0$. Since,
by Step 4,  $P_n\sqrt{\lambda_n}\rightarrow0$ , we have that $(v^1)'(0)=0$ for $C^2$ convergence. Thus 
$x_1=0$. Moreover $u_n^j(0)=u_n^1(0)$ for any $j$ by continuity of $u_n$. Then also 
 $v_n^j(0)=v_n^1(0)$  and, passing to the limit in $n$, also that 
 $v^j(0)=v^1(0)$ for any $j$. Thus $x_j=0$ for all $j$, since $U$ has a unique maximum.
At this point, proceeding as before we have 
\[
\left.J_{\lambda_{n}}\right|_{\mathcal{N}_{\lambda_{n}}}(u_{n})>\frac{3k}{2}m_{\infty}+o(1)
>m_{\infty}
\]
which leads us to a contradiction.

\medskip
\noindent \emph{Step 6: $u_n$ has a unique maximum. Moreover, this maximum coincides with $\vv$.}

By contradiction, suppose that $u_n$ has another maximum point $Q_n\neq P_n$. By the previous 
step, up to subsequences, it is possible to prove that there exists a terminal vertex $\textsc{w}$ in 
$\mathcal{G}$ such that $\lim_n d(Q_n,\textsc{w})\sqrt{\lambda_n}=0$. 
Moreover, one can check that $\textsc{w}$ must coincide with $\vv$, otherwise 
$\left.J_{\lambda_{n}}\right|_{\mathcal{N}}>\frac32 m_{\infty}$.

Thus  $\lim_n d(Q_n,\vv)\sqrt{\lambda_n}=0$.
At this point, let $w_n$ be as in \eqref{eq:w_n} in Step 4
\[ 
w_{n}(x)=\lambda_{n}^{\frac{1}{1-p}}u_{n}\left(\frac{x}{\sqrt{\lambda_{n}}}\right)
\chi_l\left(\frac{x}{\sqrt{\lambda_{n}}}\right)\text{ for }0\le x/\sqrt{\lambda_{n}}\le l
\]
and, setting $p_n=P_n\sqrt{\lambda_n}$, $q_n=Q_n\sqrt{\lambda_n}$,
we have
\begin{equation}\label{pn}
p_n,q_n\rightarrow 0\text{ while }n\rightarrow \infty\text{, and }
w_n'(p_n)=w_n'(q_n)=0.
\end{equation}
By the previous steps it holds $w_n(x)\rightarrow U(x)\chi_{[0,+\infty)}$ in $C^2_\text{loc}(\R^+)$, 
thus implying $w''(0)<0$. On the other hand, in light of (\ref{pn}) we have $w''(0)=0$ which gives us a 
contradiction.

We can prove that $P_n\equiv \vv$ exactly with the same argument, using the fact that $u_n'(\vv)=0$ since $u_n$ solves (\ref{eq:P}).

\medskip
\noindent \emph{Step 7: $w_n(x)\rightarrow U(x)\chi_{[0,+\infty)}$ in $C^0(\R^+)$.}

By Step 6, we have that $w_n$ is decreasing on $[P_n\sqrt{\lambda_n},+\infty)$. Now, given $\varepsilon>0$
there exists an $R=R(\varepsilon)$ such that $U(R)\le \varepsilon/4$. Moreover, there 
exists $\bar n=\bar n(R)$ such that, for $n>\bar n$, $ \|w_n-U\|_{C^0([0,R])}\le \varepsilon/4$. So
\begin{multline}
\|w_n-U\|_{C^0(\R^+)}\le \|w_n-U\|_{C^0([0,R])}+\|w_n\|_{C^0([R,+\infty))}+\|U\|_{C^0([R,+\infty))}\\
\le \|w_n-U\|_{C^0([0,R])}+w_n(R)+U(R)\\
+\le \|w_n-U\|_{C^0([0,R])}+|w_n(R)-U(R)|+2U(R)\\
\le  2\|w_n-U\|_{C^0([0,R])}+2U(R)\le \varepsilon.
\end{multline}

\medskip
\noindent \emph{Step 8: Proof of Claim 1--2--3.}

The proof of Claims 1 and 2 of the Theorem is a direct consequence of the previous steps.
Moreover by Step 7
$$\left\|\lambda_{n}^{\frac{1}{1-p}}\left.u_{n}\right|_I\left(\frac{x}{\sqrt{\lambda_{n}}}\right)
- U(x)\right\|_{C^0([0,l\sqrt{\lambda_n}/2])}\rightarrow 0$$
and by a change of variable we obtain Claim 3. 

\medskip
\noindent\emph{Step 9: proof of Claim 4.}

Let $l_1\in(0,l)$ be given. First, we can repeat the argument of the previous steps to prove that
$u_{n}$ has no local maximum point except for the extremal vertex.
Therefore, $u_{n}$ is strictly decreasing on any edge of the graph. 

Given again as in \eqref{eq:w_n}

\[
w_{n}(x):=\lambda_{n}^{\frac{1}{1-p}}u_{n}\left(\frac{x}{\sqrt{\lambda_{n}}}\right)\chi_{l}\left(\frac{x}{\sqrt{\lambda_{n}}}\right)\text{ for }x\ge0,
\]
by Step 7 we have $w_{n}(x)\rightarrow U(x)\chi_{[0,+\infty)}$ in $C_{\text{loc}}^{2}(\mathbb{R}^{+})$
and, by definition, there exists a constant $C_{0}$ for which 
\[
U\le C_{0}e^{-x}\text{ for }x>0.
\]
Now, fix $0<\varepsilon<1/4$ and choose $R=2\log(C_{0}/\varepsilon)$.
Then there exists $\bar{n}=\bar{n}(\varepsilon)$ such that 
\[
\|w_{n}-U\|_{C^{2}[0,R]}\le\varepsilon\text{ for }n\ge\bar{n}.
\]
We have that 
\begin{equation}
w_{n}(x)\le2\varepsilon\text{ on }R/2\le x\le R,\label{eq:wn<eps}
\end{equation}
indeed 
\[
w_{n}(x)\le U(x)+\varepsilon\le C_{0}e^{-R/2}+\varepsilon\le2\varepsilon.
\]
Now (\ref{eq:wn<eps}) implies, by rescaling, that 
\[
u_{n}\left(\frac{x}{\sqrt{\lambda_{n}}}\right)\chi_{l}\left(\frac{x}{\sqrt{\lambda_{n}}}\right)\le2\lambda_n^{\frac{1}{p-1}}\varepsilon\text{ on }R/2\le x\le R,
\]
so that, since $\frac{R}{\sqrt{\lambda_{n}}}\le\frac{l}{2}$ for $n$ large,
we have
\[
u_{n}\left(y\right)\le2\lambda_n^{\frac{1}{p-1}}\varepsilon\text{ on }\frac{R}{2\sqrt{\lambda_{n}}}\le y\le\frac{R}{\sqrt{\lambda_{n}}},
\]
and, $u_{n}$ being strictly decreasing, 
\begin{equation}
u_{n}\left(y\right)\le2\lambda_{n}^{\frac{1}{p-1}}\varepsilon\text{ on }\frac{R}{2\sqrt{\lambda_{n}}}\le y\le l.\label{eq:4.33}
\end{equation}
Now, $u_{n}$ solves 
\[
u_n''-\left(\lambda_{n}-u_n^{p-1}\right)u_n=0\text{ on }0\le y\le l
\]
and, by (\ref{eq:4.33}) and since $\varepsilon\le1/4$, there exists
$a>0$ independent from $n$ such that 
\[
\lambda_{n}-u_n^{p-1}\ge\lambda_{n}\left(1-(2\varepsilon)^{p-1}\right)\ge a\lambda_{n}\text{ on }\frac{R}{2\sqrt{\lambda_{n}}}\le y\le l.
\]
Since it is well--known (see Lemma 2.4 of \cite{Fife}) that, whenever 
\[
u''-\lambda_n q(x)u=0\text{ on }0<l_1\leq x\leq l,\qquad q\geq a,
\]
there exist two constant $c_1,c_2>0$, independent of $\lambda_n$, such that
\[
u(x)\leq c_1\lambda_n^{\frac{1}{p-1}}e^{-c_2\sqrt{\lambda_n}x}
\]
for every $l_1\leq x\leq l$, we conclude.
\end{proof}
\begin{cor}
\label{cor:limite}We have 
\[
\lim_{\lambda\rightarrow\infty}m_{\lambda}=m_{\infty}.
\]
\end{cor}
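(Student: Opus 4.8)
The plan is to pair the upper bound recorded in \eqref{eq:dallalto}, namely $\limsup_{\lambda\to\infty}m_\lambda\le m_\infty$, with the matching lower bound $\liminf_{\lambda\to\infty}m_\lambda\ge m_\infty$. I fix a sequence $\lambda_n\to\infty$ realizing the liminf and, for each $n$, I take a minimizer $u_n$ of $\left.J_{\lambda_n}\right|_{\mathcal{N}_{\lambda_n}}$, so that $\left.J_{\lambda_n}\right|_{\mathcal{N}_{\lambda_n}}(u_n)=m_{\lambda_n}$. By \eqref{eq:dallalto} these minimizers satisfy $m_{\lambda_n}\le m_\infty+o(1)$; in particular their actions are uniformly bounded and, for $n$ large, strictly below $\frac32 m_\infty$.

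The key point is that the concentration analysis in Steps 2--7 of the proof of Theorem \ref{thm:profile} never uses the exact limiting value $m_\infty$ of the action: it only exploits the elementary bound $u_n(P_n)\ge\lambda_n^{1/(p-1)}$ from Step 2 together with the fact that the action stays strictly below the thresholds $\frac32 m_\infty$, $Km_\infty$ (with $K>1$) and $\frac{3k}{2}m_\infty$ appearing in Steps 3, 4 and 5 to exclude, respectively, an interior peak, a peak at positive rescaled distance from a vertex, and a peak at a vertex of degree $k\ge3$. Since all these thresholds exceed $m_\infty$, the uniform bound $m_{\lambda_n}\le m_\infty+o(1)$ makes every such exclusion available here. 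Consequently the same steps apply verbatim to the minimizers $u_n$, forcing the maximum to concentrate at a terminal vertex and the rescaled functions $w_n$ of \eqref{eq:w_n} to converge to $U\chi_{[0,+\infty)}$, in particular strongly in $L^{p+1}_{\mathrm{loc}}(\R^+)$.

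It then remains to bound the action from below through its local $L^{p+1}$ mass, exactly as in the chain of inequalities preceding \eqref{eq:StimaJn}. Using $\left.J_{\lambda_n}\right|_{\mathcal{N}_{\lambda_n}}(u_n)=\left(\frac12-\frac1{p+1}\right)\lambda_n^{\frac12-\frac{p+1}{p-1}}|u_n|_{L^{p+1}(\mathcal{G})}^{p+1}$ and the change of variables defining $w_n$, for every fixed $R>0$ one has $\lambda_n^{\frac12-\frac{p+1}{p-1}}|u_n|_{L^{p+1}(\mathcal{G})}^{p+1}\ge|w_n|_{L^{p+1}(B_R)}^{p+1}$. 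Letting $n\to\infty$ and then $R\to\infty$, the $L^{p+1}_{\mathrm{loc}}$ convergence gives $\liminf_n m_{\lambda_n}\ge\left(\frac12-\frac1{p+1}\right)|U|_{L^{p+1}(\R^+)}^{p+1}$; since $U$ is even and lies on the Nehari manifold of \eqref{eq:problimite}, $|U|_{L^{p+1}(\R^+)}^{p+1}=\frac12|U|_{L^{p+1}(\R)}^{p+1}=\frac12\|U\|_{H^1(\R)}^2$, so the right--hand side equals precisely $m_\infty$. Combined with \eqref{eq:dallalto}, this yields $\lim_{\lambda\to\infty}m_\lambda=m_\infty$.

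The step I expect to require the most care is the verification that Steps 3--5 are genuinely noncircular in this context: one must check that each contradiction there rests only on the strict inequalities $m_{\lambda_n}<\frac32 m_\infty$, $m_{\lambda_n}<Km_\infty$ and $m_{\lambda_n}<\frac{3k}{2}m_\infty$, all guaranteed by the already--established upper bound rather than by the conclusion of Corollary \ref{cor:limite} itself. The final identification of the half--line soliton's $L^{p+1}$ mass with $m_\infty$ is then routine.
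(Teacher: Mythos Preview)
Your argument is correct and follows essentially the same route as the paper: both use the upper bound \eqref{eq:dallalto} together with the concentration analysis from the proof of Theorem~\ref{thm:profile} applied to a minimizing sequence, then extract the lower bound from the local $L^{p+1}$ convergence $w_n\to U$ exactly as in the estimate leading to \eqref{eq:StimaJn}. The only cosmetic difference is that the paper phrases the lower bound as a proof by contradiction (assuming $\lim_n m_{\lambda_n}<m_\infty$ and obtaining $J_{\lambda_n}(u_n)\ge(1-2\eta)m_\infty$ for every $\eta>0$), whereas you argue directly via $\liminf$; your explicit check that Steps~3--5 rely only on the thresholds $\frac32 m_\infty$, $Km_\infty$, $\frac{3k}{2}m_\infty$ and are therefore noncircular is exactly the point the paper leaves implicit in the phrase ``as in the proof of the previous theorem''.
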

\begin{proof}
By (\ref{eq:dallalto}) we have $\lim_{\lambda\rightarrow\infty}m_{\lambda}\le m_{\infty}$.
To prove the reverse inequality, assume by contradiction that there exists a sequence $\{u_n\}_n$ of solutions with
$\lim_n\left.J_{\lambda_{n}}\right|_{\mathcal{N}_{\lambda_{n}}}(u_{n})< m_{\infty}$, and, as in the 
proof of the previous theorem, we have
\[
w_{n}\rightarrow U\text{ in }L^{p+1}_\text{loc}(\mathbb{R}^{+})\,,
\]
$w_n$ being given by \eqref{eq:w_n}.
Now, for any $\eta$, there exists an $R=R(\eta)>0$ such that 
\[
|U|_{L^{p+1}([0,R])}^{p+1}>(1-\eta)|U|_{L^{p+1}(\mathbb{R}^{+})}^{p+1}
\]
and, since $w_{n}\rightarrow U$ in $L^{p+1}([0,R])$
there exists $n_{0}>1$ such that
\[
|w_{n}|_{L^{p+1}([0,R])}^{p+1}>(1-2\eta)|U|_{L^{p+1}(\mathbb{R}^{+})}^{p+1}\text{ for }n>n_{0}.
\]
At this point we can proceed similarly to (\ref{eq:StimaJn}), obtaining
\[
\left.J_{\lambda_{n}}\right|_{\mathcal{N}_{\lambda_{n}}}(u_{n})\ge(1-2\eta)m_{\infty}.
\]
The arbitrariness of $\eta$ provides the contradiction. 
\end{proof}

\section{Construction of peaked solutions\label{sec:LS}}

In order to perform the finite dimensional reduction, we have to linearize
Problem (\ref{eq:problimite}) around the solution $U$ and to study
the null space of the linearized problem, that is the set of solutions
to the Neumann boundary value problem 
\begin{equation}
\left\{ \begin{array}{cc}
-\psi''+\psi=pU^{p-1}\psi & \text{ in }\mathbb{R}^{+}\\
\psi'(0)=0.
\end{array}\right.\label{eq:linearizzato}
\end{equation}
While the equation $-\psi''+\psi=pU^{p-1}\psi$ in $\mathbb{R}$ has
a one-dimensional space of solutions generated by $Z(t)=U'(t)$, it
is easy to show that problem (\ref{eq:linearizzato}) has only the
trivial solution, due to the boundary condition. 

This result can be expressed in a more general form for the so called
\emph{star graphs}, i.e. graphs that are union of $n$ half lines all
connected to a same vertex $\vv_{0}$. If $\mathcal{G}$ is a star graph,
the function 
\[
\bar{u}=(u_{e})_{e=1,\dots,n}\text{ \text{where }}u_{e}(x)=U(x),\ x\ge0
\]
is a solution of problem (\ref{eq:P}). Linearizing (\ref{eq:P})
around this solution we get the Kirchhoff boundary value problem 
\begin{equation}
\left\{ \begin{array}{cc}
-\psi''+\psi=p\bar{u}^{p-1}\psi & \text{ in }\mathcal{G}\\
\sum_{e=1}^{n}\frac{d\psi_{e}}{dx}(\vv_{0})=0
\end{array}\right.\label{eq:starlinearizzato}
\end{equation}
and we can completely describe the space of $H^{1}(\mathcal{G})$
solutions of (\ref{eq:starlinearizzato}). 

We start looking for solutions of $-\psi''+\psi=pU^{p-1}\psi$ in $\mathbb{R}^{+}$
and then we will consider the boundary conditions. The space of solutions
in $H^{1}(\mathbb{R}^+)$ of $-\psi''+\psi=pU^{p-1}\psi$ is
spanned by the solutions of the following two boundary value problems
\begin{equation}
\left\{ \begin{array}{cc}
-\psi_{1}''+\psi_{1}=pU^{p-1}\psi_{1} & \text{ in }\mathbb{R}^{+}\\
\psi_{1}(0)=0
\end{array}\right.\label{eq:L1}
\end{equation}
and 
\begin{equation}
\left\{ \begin{array}{cc}
-\psi_{2}''+\psi_{2}=pU^{p-1}\psi_{2} & \text{ in }\mathbb{R}^{+}\\
\psi_{2}'(0)=0
\end{array}\right.\label{eq:L2}
\end{equation}
We can extend -respectively by odd or even reflection- any solution
of (\ref{eq:L1}) and (\ref{eq:L2}) to a solution of $-\psi''+\psi=pU^{p-1}\psi$
in $\mathbb{R}$. So we have that (\ref{eq:L2}) has no solution in
$H^{1}(\mathbb{R})$ and $\psi_{1}(x)=cU'(x)$. At this point a solution
of (\ref{eq:starlinearizzato}), taking into account the Kirchhoff boundary
condition is 
\[
\begin{array}{c}
\psi=(\psi_{e})_{e=1,\dots,n}\text{ with}\\
\psi_{e}(x)=c_{e}U'(x)\text{ for }x\ge0;\\
\sum_{e=1}^{n}c_{e}=0.
\end{array}
\]
This implies that the solution of (\ref{eq:starlinearizzato}) form
a $n-1$ dimensional linear space. This is in accordance to the case
$n=1,$ that is the half line, in which there are no solutions, and
$n=2,$ equivalent to $\mathbb{R},$ for which the linear space is
spanned by $U'(x)$.

\begin{rem}
	When dealing with the time--dependent NLS equation
	\[
	i\partial_t\psi(x,t)=-\Delta_x\psi(x,t)-|\psi(x,t)|^{p-1}\psi(x,t)+\psi(x,t)
	\]
	it is well--known that linearizing around a solution $U$ leads to the following system of equations
	\[
	\begin{pmatrix}
	L_+ & 0\\
	0 & L_-
	\end{pmatrix}
	\begin{pmatrix}
	\psi_1\\
	\psi_2
	\end{pmatrix}=0
	\]
	where $\psi(x,t)=\psi_1(x,t)+i\psi_2(x,t)$ and 
	
	\[
	\begin{split}
	&L_+:=-\Delta_x+1-pU^{p-1}\\
	&L_-:=-\Delta_x+1-U^{p-1}\,.
	\end{split}
	\]
	Note that the equation given by $L_+$ for the real part $\psi_1$ coincides with the one we derived in our discussion of the linearized problem. For the purposes of the forthcoming analysis, it is sufficient here to consider only this equation
	(for a wider discussion of the linearized problem on star graphs see for instance \cite{KP-JDE}.)
\end{rem} 

Coming back to our original problem,let us consider, for a given 
compact graph $\mathcal{G}$, the compact immersion
\[
i_{\lambda}:\left(H^{1}(\mathcal{G}),\left\langle ,\right\rangle _{\lambda}\right)\rightarrow\left(L^{2}(\mathcal{G}),\left\langle ,\right\rangle _{L^{2}}\right)
\]
and define its adjoint map
\[
i_{\lambda}^{*}:\left(L^{2}(\mathcal{G}),\left\langle ,\right\rangle _{L^{2}}\right)\rightarrow\left(H^{1}(\mathcal{G}),\left\langle ,\right\rangle _{\lambda}\right)
\]
such that 
\[
\left\langle i_{\lambda}^{*}(f),v\right\rangle _{\lambda}=\left\langle f,v\right\rangle _{L^{2}}\text{ for all }v\in H^{1}(\mathcal{G})\,,
\]
or equivalently 
\[
u=i_{\lambda}^{*}(f)\Leftrightarrow u\text{ solves }\left\{ \begin{array}{cc}
-u''+\lambda u=f & \text{ in }\mathcal{G}\\
\sum_{e\prec \vv}\frac{du_{e}}{dx}(\vv)=0 & \forall \vv\in V
\end{array}\right..
\]

\subsection{One peaked solutions}

We construct now a model profile for a solution which has a peak on
the extremal vertex $\vv_{1}$ (the vertex of degree 1) of the first
edge $I_{1}=[0,l_{1}]$. We suppose, without loss of generality that
$\vv_{1}$ corresponds to the coordinate $x=0$. We define 
\[
U_{\lambda}(x)=\left\{ \begin{array}{cc}
\lambda^{\frac{1}{p-1}}U\left(\sqrt{\lambda}x\right) & \text{ on }[0,l_{1}]\\
0 & \text{elsewhere}
\end{array}\right.
\]
and, given a cut off function $\chi:=\chi_{l}(x)$, with $l<l_1$, we define 
\begin{equation}
W_{\lambda}(x)=\chi(x)U_{\lambda}(x)\label{eq:Wlambda}
\end{equation}
and we search a solution of (\ref{eq:P}) of the form
$u=W_{\lambda}(x)+\phi$, $\phi$ being a small error in $H^{1}(\mathcal{G})$.
To improve the readability of the paper, herafter we denote 
\[
f(s):=(s^+)^{p},
\]
 so a solution of (\ref{eq:P}) can be written as 
\begin{equation}
W_{\lambda}+\phi=i_{\lambda}^{*}(f(W_{\lambda}+\phi)).\label{eq:puntofisso}
\end{equation}
We define a linear operator 
\begin{align*}
\mathcal{L}_{\lambda} & :H^{1}(\mathcal{G})\rightarrow H^{1}(\mathcal{G})\\
\mathcal{L}_{\lambda}(\phi) & =\phi-i_{\lambda}^{*}(f'(W_{\lambda})\phi)
\end{align*}
 and we recast equation (\ref{eq:puntofisso}) as 
\[
\mathcal{L}_{\lambda}(\phi)=N_{\lambda}(\phi)+R_{\lambda}
\]
where
\[
N_{\lambda}(\phi):=i_{\lambda}^{*}\left[f(W_{\lambda}+\phi)-f(W_{\lambda})-f'(W_{\lambda})\phi\right]
\]
\[
R_{\lambda}:=i_{\lambda}^{*}(f(W_{\lambda}))-W_{\lambda}.
\]
The following result implies the invertibility of $\mathcal{L}_{\lambda}$
for $\lambda$ sufficiently large.
\begin{lem}
\label{lem:Linv}There exists $\lambda_{0},c>0$ such that $\forall\lambda>\lambda_{0}$,
$\forall\phi\in H^{1}(\mathcal{G})$ it holds 
\[
\|\mathcal{L}_{\lambda}(\phi)\|_{\lambda}\ge c\|\phi\|_{\lambda}
\]
\end{lem}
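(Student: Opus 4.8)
The plan is to argue by contradiction through a blow--up (rescaling) analysis centred at the peak vertex $\vv_{1}$, reducing the coercivity of $\mathcal{L}_{\lambda}$ to the nondegeneracy of the limiting linearized operator \eqref{eq:linearizzato}. Suppose the claim fails: then there are sequences $\lambda_{n}\to\infty$ and $\phi_{n}\in H^{1}(\mathcal{G})$ with $\|\phi_{n}\|_{\lambda_{n}}=1$ and $h_{n}:=\mathcal{L}_{\lambda_{n}}(\phi_{n})$ satisfying $\|h_{n}\|_{\lambda_{n}}\to0$. Using the defining property of $i_{\lambda_{n}}^{*}$, for every $v\in H^{1}(\mathcal{G})$ one has $\langle\mathcal{L}_{\lambda_{n}}(\phi_{n}),v\rangle_{\lambda_{n}}=\langle\phi_{n},v\rangle_{\lambda_{n}}-\int_{\mathcal{G}}f'(W_{\lambda_{n}})\phi_{n}v\,dx$, so testing with $v=\phi_{n}$ and recalling $\|\phi_{n}\|_{\lambda_{n}}=1$ gives $\int_{\mathcal{G}}f'(W_{\lambda_{n}})\phi_{n}^{2}\,dx=1+o(1)$. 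The whole contradiction will come from evaluating this same integral after rescaling and finding that it must instead vanish.

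First I would introduce the rescaled functions on the terminal edge $I_{1}$. Since $f'(W_{\lambda_{n}})=p\,\chi^{p-1}\lambda_{n}U(\sqrt{\lambda_{n}}\,\cdot)^{p-1}$ is supported near $\vv_{1}=0$, I set $\tilde{\phi}_{n}(y):=\lambda_{n}^{1/4}\phi_{n}(y/\sqrt{\lambda_{n}})$ for $0\le y\le l_{1}\sqrt{\lambda_{n}}$. A direct change of variables shows $\|\tilde{\phi}_{n}\|_{H^{1}(0,l_{1}\sqrt{\lambda_{n}})}^{2}=\|\phi_{n}\|_{\lambda_{n}}^{2}$ restricted to $I_{1}$, hence $\le1$, so $\{\tilde{\phi}_{n}\}$ is bounded in $H^{1}_{\mathrm{loc}}(\R^{+})$; by a diagonal argument and lower semicontinuity it converges, up to a subsequence, weakly in $H^{1}_{\mathrm{loc}}(\R^{+})$, strongly in $L^{2}_{\mathrm{loc}}(\R^{+})$ and in $C^{0}_{\mathrm{loc}}(\R^{+})$, to some $\psi\in H^{1}(\R^{+})$ with $\|\psi\|_{H^{1}(\R^{+})}\le1$.

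Next I would pass to the limit in the equation. Given $\varphi\in C_{c}^{\infty}([0,+\infty))$, I take as test function $v_{n}(x):=\lambda_{n}^{-1/4}\varphi(\sqrt{\lambda_{n}}\,x)$ on $I_{1}$ (and $v_{n}\equiv0$ elsewhere); for $n$ large this lies in $H^{1}(\mathcal{G})$ and satisfies $\|v_{n}\|_{\lambda_{n}}=\|\varphi\|_{H^{1}(\R^{+})}$. The same change of variables turns $\langle\mathcal{L}_{\lambda_{n}}(\phi_{n}),v_{n}\rangle_{\lambda_{n}}$ into $\int_{0}^{+\infty}\tilde{\phi}_{n}'\varphi'+\tilde{\phi}_{n}\varphi-p\,\chi^{p-1}U^{p-1}\tilde{\phi}_{n}\varphi\,dy$, while its absolute value is bounded by $\|h_{n}\|_{\lambda_{n}}\|\varphi\|_{H^{1}(\R^{+})}=o(1)$. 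Letting $n\to\infty$, using $\chi(\cdot/\sqrt{\lambda_{n}})\to1$ on compact sets together with the local convergence of $\tilde{\phi}_{n}$, I obtain that $\psi$ is a weak $H^{1}(\R^{+})$ solution of $-\psi''+\psi=pU^{p-1}\psi$ with the natural boundary condition $\psi'(0)=0$, which emerges precisely because $\varphi(0)$ may be nonzero. This is exactly problem \eqref{eq:linearizzato}, which we have already observed admits only the trivial solution; hence $\psi\equiv0$.

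Finally I would close the argument by re-evaluating the concentration integral. The change of variables gives $\int_{\mathcal{G}}f'(W_{\lambda_{n}})\phi_{n}^{2}\,dx=\int_{0}^{l_{1}\sqrt{\lambda_{n}}}p\,\chi^{p-1}U^{p-1}\tilde{\phi}_{n}^{2}\,dy$, and since $U^{p-1}$ decays exponentially and $\{\tilde{\phi}_{n}\}$ is bounded in $H^{1}(\R^{+})\hookrightarrow L^{\infty}(\R^{+})$, a tail estimate combined with $\tilde{\phi}_{n}\to\psi=0$ in $L^{2}_{\mathrm{loc}}$ forces this quantity to tend to $\int_{0}^{+\infty}pU^{p-1}\psi^{2}\,dy=0$. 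This contradicts $\int_{\mathcal{G}}f'(W_{\lambda_{n}})\phi_{n}^{2}\,dx=1+o(1)$, proving the lemma. The main obstacle is the concentration/compactness issue: because $\phi_{n}$ lives on the whole graph while $f'(W_{\lambda_{n}})$ concentrates at $\vv_{1}$, one must choose the rescalings of $\phi_{n}$ and of the test functions so that no $H^{1}$ mass is lost and so that the correct Neumann condition at the terminal vertex appears in the limit; it is exactly the nondegeneracy of \eqref{eq:linearizzato} that then rules out a nontrivial limiting profile and produces the contradiction.
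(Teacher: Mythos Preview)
Your proof is correct and follows essentially the same blow--up/contradiction scheme as the paper: both rescale on the terminal edge, extract a weak $H^{1}(\mathbb{R}^{+})$ limit, identify it as a solution of \eqref{eq:linearizzato} (hence zero), and derive a contradiction from the concentration integral $\int f'(W_{\lambda_{n}})\phi_{n}^{2}\to1$. The only cosmetic difference is that the paper works with $z_{n}:=\phi_{n}-\mathcal{L}_{\lambda_{n}}(\phi_{n})$, which satisfies an \emph{exact} equation, whereas you test the weak identity for $\phi_{n}$ directly and carry the $o(1)$ term $h_{n}$ through; since $\|h_{n}\|_{\lambda_{n}}\to0$ this changes nothing.
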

\begin{proof}
We proceed by contradiction, assuming that there exist a sequence
$\lambda_{n}\rightarrow\infty$ and a sequence of functions $\phi_{n}\in H^{1}(\mathcal{G})$
such that $\|\phi_{n}\|_{\lambda}=1$ and 
\[
\|\mathcal{L}_{\lambda_{n}}(\phi_{n})\|_{\lambda_{n}}\rightarrow0.
\]
By definition of $\mathcal{L}_{\lambda}$ we have 
\[
\phi_{n}-\mathcal{L}_{\lambda_{n}}(\phi_{n})=i_{\lambda_{n}}^{*}(f'(W_{\lambda_{n}})\phi_{n})
\]
 that is 
\[
\left\{ \begin{array}{cc}
-\left(\phi_{n}-\mathcal{L}_{\lambda_{n}}(\phi_{n})\right)''+\lambda_{n}\left(\phi_{n}-\mathcal{L}_{\lambda_{n}}(\phi_{n})\right)=f'(W_{\lambda_{n}})\phi_{n} & \text{ on }\mathcal{G}\\
\sum_{e\prec \vv}\frac{d\left(\phi_{n}-\mathcal{L}_{\lambda_{n}}(\phi_{n})\right)_{e}}{dx}(\vv)=0 & \forall \vv\in V
\end{array}\right.
\]
and, set $z_{n}:=\phi_{n}-\mathcal{L}_{\lambda_{n}}(\phi_{n}),$ and
$h_{n}:=\mathcal{L}_{\lambda_{n}}(\phi_{n})$ we get 
\begin{equation}
\left\{ \begin{array}{cc}
-z_{n}''+\lambda_{n}z_{n}=f'(W_{\lambda_{n}})z_{n}+f'(W_{\lambda_{n}})h_{n} & \text{ on }\mathcal{G}\\
\sum_{e\prec \vv}\frac{dz_{e}}{dx}(\vv)=0 & \forall \vv\in V
\end{array}\right..\label{eq:zn}
\end{equation}
Also, we have 
\begin{equation}
\|z_{n}\|_{\lambda_{n}}^{2}=\|\phi_{n}\|_{\lambda_{n}}^{2}+\|\mathcal{L}_{\lambda_{n}}(\phi_{n})\|_{\lambda_{n}}^{2}-2\left\langle \phi_{n},\mathcal{L}_{\lambda_{n}}(\phi_{n})\right\rangle _{\lambda_{n}}\rightarrow1\label{eq:normazn}
\end{equation}
and, on the other hand,
\begin{align*}
\|z_{n}\|_{\lambda_{n}}^{2} & =\int_{\mathcal{G}}\left(z_{n}'\right)^{2}dx+\lambda_{n}\int_{\mathcal{G}}\left(z_{n}\right)^{2}dx\\
 & =\int_{\mathcal{G}}\left(-z_{n}''+\lambda_{n}z_{n}\right)z_{n}dx+\sum_{v\in V}\sum_{e\prec \vv}z_{n}'(\vv)z_{n}(\vv).
\end{align*}
In light of (\ref{eq:zn}) we have that $\sum_{e\prec\vv}z_{n}'(\vv)z_{n}(\vv)=0$
for all $\vv\in V$, and, since $W_{\lambda_{n}}=0$ outside the first
edge $I_{1}$, also that $-z_{n}''+\lambda_{n}z_{n}=0$ on $I_{e},$
$e\neq1$. Thus
\[
\|z_{n}\|_{\lambda_{n}}^{2}=\int_{I_{1}}\left(-z_{n}''+\lambda_{n}z_{n}\right)z_{n}dx=\int_{I_{1}}f'(W_{\lambda_{n}})z_{n}^{2}+f'(W_{\lambda_{n}})\mathcal{L}_{\lambda_{n}}(\phi_{n})z_{n}dx,
\]
and, since $\mathcal{L}_{\lambda_{n}}(\phi_{n})\rightarrow0$ in $H^{1}(\mathcal{G})$
and by (\ref{eq:normazn}), we have 
\begin{equation}
\int_{I_{1}}f'(W_{\lambda_{n}})z_{n}^{2}\rightarrow1\text{ while }n\rightarrow\infty.\label{eq:fprimozn}
\end{equation}
On the edge $I_{1}$ we consider the rescaling $s=x\sqrt{\lambda_{n}}$
and we set 
\[
\tilde{z}_{n}(s)=\lambda_{n}^{1/4}z_{n}\left(\frac{s}{\sqrt{\lambda_{n}}}\right)\text{ for }s\in[0,l_{1}\sqrt{\lambda_{n}}].
\]
Of course 
\[
\tilde{z}_{n}'(s)=\lambda_{n}^{-\frac{1}{4}}z_{n}'\left(\frac{s}{\sqrt{\lambda_{n}}}\right)\text{ and }\tilde{z}_{n}''(s)=\lambda_{n}^{-\frac{3}{4}}z_{n}''\left(\frac{s}{\sqrt{\lambda_{n}}}\right)
\]
and, recalling the definition (\ref{eq:Wlambda}) of $W_{\lambda}$,
and (\ref{eq:zn}), 
\[
-\tilde{z}_{n}''(s)+\tilde{z}_{n}'(s)=p\chi^{p-1}\left(\frac{s}{\sqrt{\lambda_{n}}}\right)U^{p-1}(s)\left[\tilde{z}_{n}(s)+\tilde{h}_{n}(s)\right]\text{ for }s\in[0,l_{1}\sqrt{\lambda_{n}}]
\]
where $\tilde{h}_{n}(s):=\lambda_{n}^{\frac{1}{4}}h_{n}\left(\frac{s}{\sqrt{\lambda_{n}}}\right)$.
Moreover it holds, for some constant $C>0$,
\begin{equation}
\|\tilde{z}_{n}\|_{H^{1}([0,l_{1}\sqrt{\lambda_{n}}])}\le C,\label{eq:zntildenorma}
\end{equation}
in fact 
\[
\int_{0}^{l_{1}\sqrt{\lambda_{n}}}\left(\tilde{z}_{n}^{'}\right)^{2}(s)+\tilde{z}_{n}^{2}(s)ds=\int_{0}^{l_{1}}\left(z_{n}^{'}\right)^{2}(x)+\lambda_{n}z_{n}^{2}(x)dx\le\|z_{n}\|_{\lambda_{n}}^{2}
\]
which is bounded by (\ref{eq:normazn}). Analogously 
\[
\|\tilde{h}_{n}\|_{H^{1}([0,l_{1}\sqrt{\lambda_{n}}])}\le\|h_{n}\|_{\lambda_{n}}\rightarrow0.
\]
By (\ref{eq:zntildenorma}) we have that there exists a function $\tilde{z}$
defined on $\mathbb{R}^{+}$ such that, fixed any $T>0$,
\begin{align*}
\tilde{z}_{n}\rightarrow\tilde{z} & \text{ a.e. in }\mathbb{R}^{+}\\
\tilde{z}_{n}\rightarrow\tilde{z} & \text{ in }L^{p}([0,T])\text{ for all }q>1\\
\tilde{z}_{n}\rightharpoonup\tilde{z} & \text{ weakly in }H^{1}([0,T]).
\end{align*}
 We can show, indeed, that $\tilde{z}\in H^{1}([0,T])$. Consider
\[
\zeta_{n}=\tilde{z}_{n}\chi\left(\frac{s}{\sqrt{\lambda_{n}}}\right).
\]
Since $\lambda_{n}\rightarrow\infty$ we have that $\|\zeta_{n}\|_{H^{1}(\mathbb{R}^{+})}\le C\|\tilde{z}_{n}\|_{H^{1}([0,l_{1}\sqrt{\lambda_{n}}])}\le C,$
thus $\zeta_{n}$ admits a weak limit in $H^{1}(\mathbb{R}^{+})$.
Also, $\zeta_{n}=\tilde{z}_{n}$ on $[0,\delta\sqrt{\lambda_{n}}]$,
so $\zeta_{n}\rightharpoonup\tilde{z}$ weakly in $H^{1}(\mathbb{R}^{+})$
and $\tilde{z}\in H^{1}(\mathbb{R}^{+})$. 

Now, take a function $\varphi\in C^{\infty}(\mathbb{R}^{+})$, and
take $T>0$ such that the support of $\varphi$ is included in $[0,T]$,
so
\begin{align*}
\int_{[0,T]}\left(-\tilde{z}_{n}''(s)\right.&\left.+\tilde{z}_{n}'(s)\right)\varphi(s)ds\\ &=\int_{[0,T]}p\left(\chi^{p-1}\left(\frac{s}{\sqrt{\lambda_{n}}}\right)U^{p-1}(s)\left[\tilde{z}_{n}(s)+\tilde{h}_{n}(s)\right]\right)\varphi(s)ds\\
 & =\int_{[0,T]}pU^{p-1}(s)\tilde{z}_{n}(s)\varphi(s)ds+o(1)
\end{align*}
Integrating by parts the first term and passing to the limit we have
that 
\begin{align*}
\int_{\mathbb{R}^{+}}\tilde{z}'(s)\varphi(s)+\tilde{z}'(s)\varphi(s)ds & =\int_{\mathbb{R}^{+}}pU^{p-1}(s)\tilde{z}(s)\varphi(s)ds.
\end{align*}
 Since $\varphi$ is arbitrary, we have that $\tilde{z}$ is a solution
of (\ref{eq:linearizzato}), so $\tilde{z}\equiv0$. Moreover, extending
by zero $\tilde{z}_{n}$ to the whole half line, we have $\tilde{z}_{n}\rightharpoonup0\text{ in }L^{2}(\mathbb{R}^{+})$,
thus 
\[
p\int_{0}^{l_{1}\sqrt{\lambda_{n}}}U^{p-1}(s)\tilde{z}_{n}^{2}(s)ds=p\int_{\mathbb{R}^{+}}U^{p-1}(s)\tilde{z}_{n}^{2}(s)ds\rightarrow0.
\]
 This leads to a contradiction in light of (\ref{eq:fprimozn}), in
fact
\begin{align*}
p\int_{0}^{l_{1}\sqrt{\lambda_{n}}}U^{p-1}(s)\tilde{z}_{n}^{2}(s)ds & \ge p\int_{0}^{l_{1}\sqrt{\lambda_{n}}}\chi^{p-1}\left(\frac{s}{\sqrt{\lambda_{n}}}\right)U^{p-1}(s)\tilde{z}_{n}^{2}(s)ds\\
 & =\int_{0}^{l_{1}}f'(W_{\lambda_{n}})z_{n}^{2}dx\rightarrow1.
\end{align*}
This concludes the proof.
\end{proof}
\begin{prop}
\label{prop:resto}We have $\|R\|_{\lambda}\le\lambda^{-\alpha}$
for any $\alpha>0$.
\end{prop}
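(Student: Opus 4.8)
The plan is to exploit the fact that the rescaled soliton $U_\lambda$ solves the equation on $I_1$ \emph{exactly}, so that the residual $R_\lambda$ is produced entirely by the cut--off $\chi$ and is therefore supported away from the terminal vertex, precisely where $U$ is exponentially small. First I would rewrite $R_\lambda$ as the image under $i_\lambda^*$ of an explicit error term. Since $U$ solves \eqref{eq:problimite}, a direct computation gives $-U_\lambda''+\lambda U_\lambda=U_\lambda^p$ on $I_1$, and hence, for $W_\lambda=\chi U_\lambda$,
\[
-W_\lambda''+\lambda W_\lambda=\chi U_\lambda^p-2\chi' U_\lambda'-\chi'' U_\lambda.
\]
On the other hand $f(W_\lambda)=W_\lambda^p=\chi^p U_\lambda^p$ because $W_\lambda\ge0$. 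Observing that $W_\lambda$ fulfils the Kirchhoff conditions (at the degree--$1$ vertex this reduces to $W_\lambda'(0)=0$, which holds since $\chi'(0)=0$ and $U'(0)=0$, while $W_\lambda\equiv0$ near every other vertex), the operator $-u''+\lambda u$ with Kirchhoff conditions being invertible I would conclude that
\[
R_\lambda=i_\lambda^*(E_\lambda),\qquad E_\lambda:=(\chi^p-\chi)U_\lambda^p+2\chi' U_\lambda'+\chi'' U_\lambda.
\]

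Next I would bound $\|R_\lambda\|_\lambda$ by duality. For every $v\in H^1(\G)$ the defining property of $i_\lambda^*$ gives $\langle R_\lambda,v\rangle_\lambda=\langle E_\lambda,v\rangle_{L^2}\le\|E_\lambda\|_{L^2(\G)}\,\|v\|_{L^2(\G)}$, and since $\lambda\|v\|_{L^2}^2\le\|v\|_\lambda^2$ one obtains
\[
\|R_\lambda\|_\lambda\le\lambda^{-1/2}\,\|E_\lambda\|_{L^2(\G)}.
\]
It thus remains to estimate $\|E_\lambda\|_{L^2}$. The crucial point is that $\chi^p-\chi$, $\chi'$ and $\chi''$ all vanish on a neighbourhood of the terminal vertex $x=0$, since $\chi\equiv1$ there; hence $E_\lambda$ is supported in a region $\{a\le x\le l\}$ with $a>0$ independent of $\lambda$. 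On this set each factor $U_\lambda,\,U_\lambda',\,U_\lambda^p$ carries the exponential smallness of $U$: using $U(y),|U'(y)|\le C e^{-y}$ (which follows from the explicit form \eqref{eq:bubble}) one gets $|U_\lambda(x)|\le C\lambda^{\frac1{p-1}}e^{-a\sqrt\lambda}$, and analogously for the remaining terms, with at worst an extra algebraic factor $\lambda^{\mathrm{const}}$ coming from the rescaling $x\mapsto\sqrt\lambda x$. Integrating over the bounded support yields
\[
\|E_\lambda\|_{L^2(\G)}\le C\,\lambda^{\beta}e^{-c\sqrt\lambda}
\]
for suitable constants $\beta,c>0$.

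Combining the two displays gives $\|R_\lambda\|_\lambda\le C\lambda^{\beta-1/2}e^{-c\sqrt\lambda}$. Since the exponential $e^{-c\sqrt\lambda}$ decays faster than any power of $\lambda$, for every $\alpha>0$ the right--hand side is $\le\lambda^{-\alpha}$ once $\lambda$ is large, which is exactly the claim. I expect the only delicate point to be the bookkeeping of the powers of $\lambda$ generated by the rescaling in $U_\lambda'$ and $U_\lambda''$; the conceptual heart, namely that the cut--off error lives where $U$ is exponentially small, makes the exponential gain automatic and renders the precise value of $\beta$ irrelevant.
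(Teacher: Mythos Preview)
Your proof is correct and follows essentially the same route as the paper's: you identify $R_\lambda=i_\lambda^*(E_\lambda)$ with $E_\lambda=(\chi^p-\chi)U_\lambda^p+2\chi'U_\lambda'+\chi''U_\lambda$, which is exactly the right--hand side the paper obtains in its equation for $V-W_\lambda$, and then you estimate the $L^2$ norm of $E_\lambda$ via the exponential decay of $U$ on the support of the cut--off derivatives. The only cosmetic difference is that you phrase the bound $\|R_\lambda\|_\lambda\le\lambda^{-1/2}\|E_\lambda\|_{L^2}$ via duality, whereas the paper tests the equation directly against $V-W_\lambda$; the resulting estimates are identical.
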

\begin{proof}
Take $V=i_{\lambda}^{*}(f(W_{\lambda}))$. Then we have, by direct
computation, that 
\begin{align}
-(V-W_{\lambda})''(x)+\lambda(V-W_{\lambda})(x)&=  (\chi^{p}-\chi)(x)\lambda^{\frac{p}{p-1}}U^{p}(x\sqrt{\lambda})\nonumber \\
 & -\lambda^{\frac{1}{p-1}}\chi''(x)U(x\sqrt{\lambda})-2\lambda^{\frac{1}{p-1}}\sqrt{\lambda}\chi'(x)U'(x\sqrt{\lambda})\label{eq:VmenoW}
\end{align}
and $V'(0)=0.$ Thus, multiplying (\ref{eq:VmenoW}) by $V-W_{\lambda}$,
and integrating by parts we have
\begin{align*}
\|R\|_{\lambda} & =\|V-W_{\lambda}\|_{\lambda}\le C\lambda^{\frac{p}{p-1}}|(\chi^{p}-\chi)(x)U^{p}(x\sqrt{\lambda})|_{L^{2}([0,l_{1}])}\\
 & +C\lambda^{\frac{1}{p-1}}|\chi''(x)U(x\sqrt{\lambda})|_{L^{2}([0,l_{1}])}+C\lambda^{\frac{1}{p-1}}\sqrt{\lambda}|\chi'(x)U'(x\sqrt{\lambda})|_{L^{2}([0,l_{1}])}\\
 & =:I_{1}+I_{2}+I_{3}.
\end{align*}
 By a change of variables, and since $U(x)$ decays exponentially
in $x$, we have 
\begin{align*}
I_{1}^{2} & \le C\lambda^{\frac{2p}{p-1}}\int_{\delta}^{2\delta}U^{2p}(x\sqrt{\lambda})dx=C\lambda^{\frac{2p}{p-1}-\frac{1}{2}}\int_{\delta\sqrt{\lambda}}^{2\delta\sqrt{\lambda}}U^{2p}(s)ds\\
&\le C\lambda^{\frac{3p+1}{2(p-1)}}\int_{\delta\sqrt{\lambda}}^{2\delta\sqrt{\lambda}}e^{-2ps}ds \le C\lambda^{\frac{3p+1}{2(p-1)}}\left[-\frac{e^{-2ps}}{2p}\right]_{\delta\sqrt{\lambda}}^{2\delta\sqrt{\lambda}}\le C\lambda^{\frac{3p+1}{2(p-1)}}e^{-2p\delta\sqrt{\lambda}}.
\end{align*}
In the same way we can proceed for $I_{2}$ and $I_{3}$, obtaining
the claim.
\end{proof}
\begin{proof}[Proof of Theorem \ref{thm:1picco}]
We look for a solution of (\ref{eq:puntofisso}) in the form $W_{\lambda}+\phi$,
where $W_{\lambda}$ is defined in (\ref{eq:Wlambda}). This corresponds
to find a fixed point of the map 
\begin{align*}
T_{\lambda}: & H^{1}(\mathcal{G})\rightarrow H^{1}(\mathcal{G})\\
T_{\lambda}(\phi):= & \mathcal{L}_{\lambda}^{-1}\left(N_{\lambda}(\phi)+R_{\lambda}\right).
\end{align*}
We prove that $T$ is a contraction on $\left\{ \phi\in H^{1}(\mathcal{G}),\ \|\phi\|_{\lambda}\le c\lambda^{-\alpha}\right\} $
for some positive $\alpha,c$. By Lemma \ref{lem:Linv}, there exists
$c>0$ such that
\begin{eqnarray*}
\|T_{\lambda}(\phi)\|_{\lambda} & \le & c\left(\|N_{\lambda}(\phi)\|_{\lambda}+\|R_{\lambda}\|_{\lambda}\right)\\
\|T_{\lambda}(\phi_{1})-T_{\lambda}(\phi_{2})\|_{\lambda} & \le & c\left(\|N_{\lambda}(\phi_{1})-N_{\lambda}(\phi_{2})\|_{\lambda}\right).
\end{eqnarray*}
By the mean value theorem and by the properties of $i_{\lambda}^{*}$
there exists $0<\theta(x)<1$ such that
\begin{align*}
\|N_{\lambda}(\phi_{1})-N_{\lambda}(\phi_{2})\|_{\lambda}^{2} & \le c\int_{\mathcal{G}}\left[(W_{\lambda}+\phi_{2}+\theta(\phi_{1}-\phi_{2}))^{p-1}-(W_{\lambda})^{p-1}\right]^{2}\left(\phi_{1}-\phi_{2}\right)^{2}dx,
\end{align*}
 so, if $\|\phi_{i}\|_{\lambda}$ is small enough, then also $|\phi_{i}|_{L^{2}(\mathcal{G})}$
is small and we can find a constant $0<K<1$ such that 
\[
\|N_{\lambda}(\phi_{1})-N_{\lambda}(\phi_{2})\|_{\lambda}\le K\|\phi_{1}-\phi_{2}\|_{\lambda}\,.
\]
In a similar way we can prove that, if $\|\phi\|_{\lambda}$ is small
enough, by Proposition \ref{prop:resto}

\[
\|T_{\lambda}(\phi)\|_{\lambda}\le c\left(\|N_{\lambda}(\phi)\|_{\lambda}+\|R_{\lambda}\|_{\lambda}\right)\le c\left(\|\phi\|_{\lambda}^{2}+\lambda^{-\alpha}\right).
\]
Then there exists $c>0$ such that $T_{\lambda}$ maps a ball of center
$0$ and radius $c\lambda^{-\alpha}$ in $H^{1}(\mathcal{G})$ into
itself and it is a contraction. So there exists a fixed point $\phi_{\lambda}$
with norm $\|\phi_{\lambda}\|_{\lambda}=O(\lambda^{-\alpha})$. 

At this point we proved that (\ref{eq:P}) has a one-peaked solution
$u=W_{\lambda}+\phi$, with $\|\phi_{\lambda}\|_{\lambda}=O(\lambda^{-\alpha})$.
To conclude the proof we compute the $L^{2}$ norm of the solution,
that is 
\begin{align*}
|u|_{L^{2}(\mathcal{G})}^2 & =C|W_{\lambda}|_{L^{2}(\mathcal{G})}^2+l.o.t.=C\int_{0}^{l_{1}}U_{\lambda}^{2}(x)\chi^{2}(x)dx+l.o.t.\\
 & =C\lambda^{\frac{5-p}{2(p-1)}}|U|_{L^{2}(\mathbb{R}^{+})}^{2}+l.o.t.
\end{align*}
which concludes the proof. 
\end{proof}

\subsection{Multipeaked solutions}

We consider now a graph $\mathcal{G}$ which has at least $k$ vertices
$\vv_{1}\dots,\vv_{k}$ of degree $1$, and we construct a solution
of (\ref{eq:P}) which has a positive peak on any vertex $\vv_{i}$,
$i=1,\dots,k$. Without loss of generality we suppose that each vertex
$\vv_{i}$, $i=1,\dots,k$ lies on of the edge $I_{i}=[0,l_{i}]$ and
that $\vv_{i}$ corresponds to the coordinate $x=0$. 

The strategy of the proof is similar to the previous one, so
we only underline the differences. We define 
\begin{equation}
W_{\lambda}(x)=\sum_{i=1}^{k}\chi_{i}(x)U_{\lambda,i}(x)\label{eq:Wlambdai}
\end{equation}
where 
\[
U_{\lambda,i}(x)=\left\{ \begin{array}{cc}
\lambda^{\frac{1}{p-1}}U(x\sqrt{\lambda}) & \text{ on }[0,l_{i}]\\
0 & \text{elsewhere}
\end{array}\right.
\]
and, $\chi_{i}:=\chi_{\delta,i}(x)$ is a cut off function which is
$1$ on $[0,\delta/2]\subset[0,l_{i}]$ and $0$ on $[\delta,l_{i}]$
and on every other edge $I_{j}$, $j\neq i$. Here $\delta<\min_i l_i$.

It is clear that $W_{\lambda}(x)\in H^{1}(\mathcal{G})$.
As before, we search a solution of (\ref{eq:P}) of the form $u=W_{\lambda}(x)+\phi$,
$\phi$ being a small error in $H^{1}(\mathcal{G})$. We can prove
the invertibility of the operator $\mathcal{L}_{\lambda}$ as following.
\begin{lem}
\label{lem:Linv-1}There exist $\lambda_{0},c>0$ such that $\forall\lambda>\lambda_{0}$,
$\forall\phi\in H^{1}(\mathcal{G})$ it holds 
\[
\|\mathcal{L}_{\lambda}(\phi)\|_{\lambda}\ge c\|\phi\|_{\lambda}
\]
\end{lem}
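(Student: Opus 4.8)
The plan is to mirror the proof of Lemma \ref{lem:Linv} for the one-peaked case, adapting it to the multipeaked model function $W_\lambda$ defined in \eqref{eq:Wlambdai}. As before, I would argue by contradiction: assume there exist $\lambda_n\to\infty$ and $\phi_n\in H^1(\mathcal{G})$ with $\|\phi_n\|_{\lambda_n}=1$ and $\|\mathcal{L}_{\lambda_n}(\phi_n)\|_{\lambda_n}\to 0$. Setting $z_n:=\phi_n-\mathcal{L}_{\lambda_n}(\phi_n)$ and $h_n:=\mathcal{L}_{\lambda_n}(\phi_n)$, the function $z_n$ solves the same Kirchhoff boundary value problem \eqref{eq:zn}, and by the identical integration-by-parts computation, using that $f'(W_{\lambda_n})=0$ outside the union $\bigcup_{i=1}^k I_i$ of the chosen terminal edges, one obtains
\[
\|z_n\|_{\lambda_n}^2=\sum_{i=1}^k\int_{I_i}f'(W_{\lambda_n})z_n^2+f'(W_{\lambda_n})\mathcal{L}_{\lambda_n}(\phi_n)z_n\,dx,
\]
which combined with $\mathcal{L}_{\lambda_n}(\phi_n)\to 0$ and $\|z_n\|_{\lambda_n}^2\to 1$ yields $\sum_{i=1}^k\int_{I_i}f'(W_{\lambda_n})z_n^2\to 1$.

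The key structural observation making the adaptation work is that the supports of the cut-off functions $\chi_i$ are pairwise disjoint, being localized near distinct vertices $\vv_i$ on distinct terminal edges. Hence the nonlinear potential $f'(W_{\lambda_n})=p\,\chi_i^{p-1}(x)\lambda_n U^{p-1}(x\sqrt{\lambda_n})$ decouples across the edges $I_1,\dots,I_k$, and each edge can be analyzed in complete analogy with the single terminal edge of Lemma \ref{lem:Linv}. On each edge $I_i$ I would introduce the rescaling $s=x\sqrt{\lambda_n}$ and the blow-up $\tilde z_n^{(i)}(s)=\lambda_n^{1/4}z_n(s/\sqrt{\lambda_n})$, show it is bounded in $H^1([0,l_i\sqrt{\lambda_n}])$ by the same estimate bounding the rescaled $H^1$ norm by $\|z_n\|_{\lambda_n}^2$, and extract a weak limit $\tilde z^{(i)}\in H^1(\mathbb{R}^+)$. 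Passing to the limit in the rescaled equation shows that each $\tilde z^{(i)}$ solves the linearized Neumann problem \eqref{eq:linearizzato} on $\mathbb{R}^+$, which by the discussion following \eqref{eq:linearizzato} admits only the trivial solution $\tilde z^{(i)}\equiv 0$.

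Finally, for each $i$ the weak convergence $\tilde z_n^{(i)}\rightharpoonup 0$ in $L^2(\mathbb{R}^+)$ gives $p\int U^{p-1}(\tilde z_n^{(i)})^2\,ds\to 0$, and since $\int_{I_i}f'(W_{\lambda_n})z_n^2\,dx\le p\int_0^{l_i\sqrt{\lambda_n}}U^{p-1}(\tilde z_n^{(i)})^2\,ds\to 0$ for every $i$, summing over $i=1,\dots,k$ forces $\sum_{i=1}^k\int_{I_i}f'(W_{\lambda_n})z_n^2\to 0$, contradicting the limit value $1$ established above. I expect the only genuine care needed, rather than a real obstacle, to be the bookkeeping that guarantees the cut-offs $\chi_i$ have disjoint supports and that $-z_n''+\lambda_n z_n=0$ holds on every portion of $\mathcal{G}$ outside $\bigcup_i I_i$ (so that the boundary terms at every vertex vanish by the Kirchhoff condition exactly as in the one-peak case); once the decoupling is in place, the edge-by-edge argument is a verbatim repetition of Lemma \ref{lem:Linv}.
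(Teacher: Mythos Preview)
Your proposal is correct and follows essentially the same approach as the paper. The only cosmetic difference is that the paper, after obtaining $\sum_{i=1}^k\int_{I_i}f'(W_{\lambda_n})z_n^2\to 1$, uses a pigeonhole argument to pick a single edge $I_{\bar i}$ on which $\int_{I_{\bar i}}f'(W_{\lambda_n})z_n^2\not\to 0$ and performs the rescaling and blow-up only there, whereas you carry out the rescaling on every terminal edge $I_i$ and sum the resulting vanishing terms; both routes reach the same contradiction by repeating verbatim the argument of Lemma~\ref{lem:Linv}.
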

\begin{proof}
As before, we proceed by contradiction, assuming that there exist
a sequence $\lambda_{n}\rightarrow\infty$ and a sequence of functions
$\phi_{n}\in H^{1}(\mathcal{G})$ such that $\|\phi_{n}\|_{\lambda}=1$
and $\|\mathcal{L}_{\lambda_{n}}(\phi_{n})\|_{\lambda_{n}}\rightarrow0.$

Setting $z_{n}:=\phi_{n}-\mathcal{L}_{\lambda_{n}}(\phi_{n})$ and $h_{n}:=\mathcal{L}_{\lambda_{n}}(\phi_{n})$,
we can prove as in Lemma \ref{lem:Linv} that $z_{n}$ solves equation
(\ref{eq:zn}) and that $\|z_{n}\|_{\lambda_{n}}^{2}\rightarrow1$
as $n\rightarrow\infty$. Since $W_{\lambda_{n}}=0$ outside the first
$k$ edges $I_{1},\dots,I_{k}$, we have 
\begin{equation}
\|z_{n}\|_{\lambda_{n}}^{2}=\sum_{i=1}^{k}\int_{I_{i}}\left(-z_{n}''+\lambda_{n}z_{n}\right)z_{n}dx=\sum_{i=1}^{k}\int_{I_{i}}f'(W_{\lambda_{n}})z_{n}^{2}dx+o(1).\label{eq:normasomma}
\end{equation}
This means that there is at least one edge $I_{\bar{i}}$ such that 
\begin{equation}
\int_{I_{\bar{i}}}f'(W_{\lambda_{n}})z_{n}^{2}dx\not\rightarrow0\label{eq:nonzero}\,.
\end{equation}
Letting now $z_{n,\bar{i}}=\left.z_{n}\right|_{I_{\bar{i}}}$, we can
define the functions
\[
\tilde{z}_{n}(s)=\lambda_{n}^{1/4}z_{n,\bar{i}}\left(\frac{s}{\sqrt{\lambda_{n}}}\right)\text{ for }s\in[0,l_{\bar{i}}\sqrt{\lambda_{n}}]
\]
and we can repeat the argument of Lemma \ref{lem:Linv} to prove that
$\tilde{z}_{n}\rightharpoonup0\text{ in }L^{2}(\mathbb{R}^{+})$ as
$n\rightarrow\infty$. This contradicts (\ref{eq:nonzero}).
\end{proof}
\begin{prop}
\label{prop:resto-1}We have $\|R\|_{\lambda}\le\lambda^{-\alpha}$
for any $\alpha>0$.
\end{prop}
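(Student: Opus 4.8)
The plan is to mirror the one-peaked estimate of Proposition \ref{prop:resto}, exploiting the fact that the multipeaked ansatz $W_\lambda$ in \eqref{eq:Wlambdai} is a sum of $k$ terms, each supported on a distinct terminal edge $I_i$. Since the cut-off functions $\chi_i$ have disjoint supports (each $\chi_i$ vanishes on every edge $I_j$ with $j\neq i$ and away from $\vv_i$), the profile $W_\lambda$ restricted to $I_i$ coincides with the single-peak profile $\chi_i(x)\lambda^{1/(p-1)}U(\sqrt\lambda x)$, and $W_\lambda\equiv 0$ on $\mathcal G\setminus\bigcup_{i=1}^k I_i$. First I would set $V:=i_\lambda^*(f(W_\lambda))$ so that $R=V-W_\lambda$, and compute $-(V-W_\lambda)''+\lambda(V-W_\lambda)$ edge by edge. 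On each $I_i$ this reproduces exactly the right-hand side of \eqref{eq:VmenoW}, namely
\[
(\chi_i^p-\chi_i)\lambda^{\frac{p}{p-1}}U^p(x\sqrt\lambda)-\lambda^{\frac{1}{p-1}}\chi_i'' U(x\sqrt\lambda)-2\lambda^{\frac{1}{p-1}}\sqrt\lambda\,\chi_i' U'(x\sqrt\lambda),
\]
while on the remaining edges the right-hand side vanishes because $W_\lambda\equiv0$ there. The Neumann/Kirchhoff boundary condition $V'(\vv)=0$ at every vertex is inherited from the definition of $i_\lambda^*$.

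Next I would multiply by $V-W_\lambda$ and integrate by parts, obtaining $\|R\|_\lambda^2$ as a sum over $i=1,\dots,k$ of the same three integrals $I_1,I_2,I_3$ estimated in Proposition \ref{prop:resto}, one triple per edge. By the triangle inequality,
\[
\|R\|_\lambda\le\sum_{i=1}^k\Big(I_1^{(i)}+I_2^{(i)}+I_3^{(i)}\Big),
\]
and each $I_j^{(i)}$ is bounded exactly as in the one-peaked case. The crucial structural point is that every cut-off $\chi_i$ transitions from $1$ to $0$ on an annular region bounded away from the vertex (between $\delta/2$ and $\delta$), so the factors $\chi_i^p-\chi_i$, $\chi_i''$ and $\chi_i'$ are all supported where $U(x\sqrt\lambda)$ is exponentially small. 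The change of variables $s=x\sqrt\lambda$ together with the exponential decay $U(x)\le C_0 e^{-x}$ then yields, for each $i$ and each $j$, a bound of the form $C\lambda^{\beta}e^{-c\delta\sqrt\lambda}$ for suitable $\beta$ and $c>0$ depending only on $p$.

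Since $k$ is a fixed finite number, summing the $k$ identical exponential bounds only multiplies the estimate by a constant, and the exponential factor $e^{-c\delta\sqrt\lambda}$ dominates any polynomial power $\lambda^\beta$ as $\lambda\to\infty$. Hence $\|R\|_\lambda\le\lambda^{-\alpha}$ for every $\alpha>0$, which is the claim. I do not expect any genuine obstacle here: the disjointness of the supports of the $\chi_i$ decouples the computation completely, so the proof reduces to applying Proposition \ref{prop:resto} on each terminal edge and summing. The only point requiring a line of justification is that $W_\lambda$ genuinely splits as a sum with disjoint supports and that $i_\lambda^*(f(W_\lambda))$ respects this edge-by-edge structure through the Kirchhoff conditions; once this is noted, the estimate follows verbatim from the single-peak argument. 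Accordingly I would phrase the proof as \emph{``The computation is identical to that of Proposition \ref{prop:resto}, carried out on each edge $I_i$ separately; summing the $k$ resulting bounds and using that $U$ decays exponentially gives the claim.''}
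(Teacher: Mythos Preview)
Your proposal is correct and matches the paper's own proof essentially verbatim: the paper also sets $V=i_\lambda^*(f(W_\lambda))$, writes down the analogue of \eqref{eq:VmenoW} as a sum over $i=1,\dots,k$ (this is equation \eqref{eq:VmenoW-1}), and then says that this ``leads to the same conclusion of Proposition \ref{prop:resto}.'' Your more detailed justification of the disjoint-support decoupling is a welcome expansion of what the paper leaves implicit.
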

\begin{proof}
As in Proposition \ref{prop:resto}, we take $V=i_{\lambda}^{*}(f(W_{\lambda}))$,
where $W_{\lambda}$ is defined in (\ref{eq:Wlambdai}). Then we find
that $V-W_{\lambda}$ solves the following differential equation
\begin{multline}
-(V-W_{\lambda})''(x)+\lambda(V-W_{\lambda})(x)=\sum_{i=1}^{k}(\chi_{i}^{p}-\chi_{i})(x)\lambda^{\frac{p}{p-1}}U^{p}(x\sqrt{\lambda})\\
-\lambda^{\frac{1}{p-1}}\sum_{i=1}^{k}\chi_{i}''(x)U(x\sqrt{\lambda})-2\lambda^{\frac{1}{p-1}}\sqrt{\lambda}\sum_{i=1}^{k}\chi_{i}'(x)U'(x\sqrt{\lambda})\label{eq:VmenoW-1}
\end{multline}
which leads to the same conclusion of Proposition \ref{prop:resto}.
\end{proof}
\begin{proof}[Proof of Theorem \ref{thm:multi}]
The proof of this theorem is verbatim the proof of Theorem \ref{thm:1picco}. 
\end{proof}

\end{document}